%File: anonymous-submission-latex-2026.tex
\documentclass[letterpaper]{article} % DO NOT CHANGE THIS
\usepackage{aaai2026}  % DO NOT CHANGE THIS
\usepackage{times}  % DO NOT CHANGE THIS
\usepackage{helvet}  % DO NOT CHANGE THIS
\usepackage{courier}  % DO NOT CHANGE THIS
\usepackage[hyphens]{url}  % DO NOT CHANGE THIS
\usepackage{graphicx} % DO NOT CHANGE THIS
\urlstyle{rm} % DO NOT CHANGE THIS
  % DO NOT CHANGE THIS
\usepackage{natbib}  % DO NOT CHANGE THIS AND DO NOT ADD ANY OPTIONS TO IT
\usepackage{caption} % DO NOT CHANGE THIS AND DO NOT ADD ANY OPTIONS TO IT
\frenchspacing  % DO NOT CHANGE THIS
\setlength{\pdfpagewidth}{8.5in} % DO NOT CHANGE THIS
\setlength{\pdfpageheight}{11in} % DO NOT CHANGE THIS
%
% These are recommended to typeset algorithms but not required. See the subsubsection on algorithms. Remove them if you don't have algorithms in your paper.

\usepackage{amsmath,amsfonts,amssymb,amsthm,mathtools}
\usepackage{icomma}
\usepackage{mathtools,amsthm}
\usepackage{color}
\allowdisplaybreaks[4]
\usepackage{etoolbox}
\usepackage{algorithm}
\usepackage{algpseudocode}

\usepackage{mathtools}
\usepackage{makecell}
\usepackage{nicematrix}
\usepackage{tablefootnote}
\usepackage{pdfpages}
\usepackage{soul}

\newcounter{annotatecount}
\newcounter{annotateidx}
\newcounter{annotatejdx}
\newcounter{annotatelabelcount}
\newcommand{\atran}[2]{\stepcounter{annotatecount}\overset{(\alph{annotatecount})}{#1}\csgdef{annotatedescription\theannotatecount}{#2}}
\newcommand{\aeq}[1]{\atran{=}{#1}}
\newcommand{\aleq}[1]{\atran{\leq}{#1}}

\newcommand{\annotateinitused}{\setcounter{annotateidx}{0}\whileboolexpr{test{\ifnumless{\theannotateidx}{\theannotatecount}}}{\stepcounter{annotateidx}\csgdef{aused\theannotateidx}{0}}}
\newcommand{\annotategetlabels}{\setcounter{annotatejdx}{0}\setcounter{annotatelabelcount}{0}\whileboolexpr{test{\ifnumless{\theannotatejdx}{\theannotatecount}}}{\stepcounter{annotatejdx}\ifcsequal{annotatedescription\theannotateidx}{annotatedescription\theannotatejdx}{\csgdef{aused\theannotatejdx}{1}\stepcounter{annotatelabelcount}\csedef{annotatelabel\theannotatelabelcount}{(\alph{annotatejdx})}}{}}}
\newcommand{\annotateprintlabels}{\setcounter{annotatejdx}{0}\whileboolexpr{test{\ifnumless{\theannotatejdx}{\theannotatelabelcount}}}{\stepcounter{annotatejdx}\ifnumequal{\theannotatejdx}{\theannotatelabelcount}{\ifnumequal{\theannotatejdx}{1}{}{~and~}}{}\csuse{annotatelabel\theannotatejdx}\ifnumless{\theannotatejdx}{\theannotatelabelcount}{\ifnumless{\theannotatejdx+1}{\theannotatelabelcount}{,~}{}}{}}}
\newcommand{\annotate}{\annotateinitused\setcounter{annotateidx}{0}\whileboolexpr{test{\ifnumless{\theannotateidx}{\theannotatecount}}}{\stepcounter{annotateidx}\ifcsstring{aused\theannotateidx}{0}{\ifnumequal{\theannotateidx}{1}{}{;~}\annotategetlabels\annotateprintlabels~\csuse{annotatedescription\theannotateidx}}{}}\setcounter{annotatecount}{0}}

\newcommand{\norm}[1]{\left\lVert#1\right\rVert}
\newcommand{\sqn}[1]{\left\lVert#1\right\rVert^2}

\newcommand{\texe}{\tau}

\newtheorem{theorem}{Theorem}[]
\newtheorem{definition}{Definition}

\newtheorem{lemma}{Lemma}
\newtheorem{remark}{Remark}
\newtheorem{proposition}{Proposition}
\newtheorem{corollary}{Corollary}

\newtheorem{assumption}{Assumption}

% math operators

% probability
\newcommand{\E}[1]{\mathbb{E}\left[#1\right]}

\newcommand{\Ec}[2]{\mathbb{E}\left[#1\;\middle|\;#2\right]}

% other

\newcommand{\R}{\mathbb{R}}

\newcommand{\ones}{\mathbf{1}}

% matrices

\newcommand{\cE}{\mathcal{E}}
\newcommand{\cF}{\mathcal{F}}
\newcommand{\cG}{\mathcal{G}}
\newcommand{\cH}{\mathcal{H}}

\newcommand{\cL}{\mathcal{L}}

\newcommand{\cO}{\mathcal{O}}

\newcommand{\cV}{\mathcal{V}}

\newcommand{\mD}{\mathbf{D}}

\newcommand{\mg}{\mathbf{g}}

\newcommand{\mI}{\mathbf{I}}

\newcommand{\mP}{\mathbf{P}}
\newcommand{\mQ}{\mathbf{Q}}

\newcommand{\mT}{\mathbf{T}}

\newcommand{\mW}{\mathbf{W}}

\newcommand{\ol}[1]{\overline{#1}}
\newcommand{\uln}[1]{\underline{#1}}
%
% These are are recommended to typeset listings but not required. See the subsubsection on listing. Remove this block if you don't have listings in your paper.
\usepackage{newfloat}
\usepackage{listings}
\usepackage{booktabs}
\usepackage{xr}
%
% Keep the \pdfinfo as shown here. There's no need
% for you to add the /Title and /Author tags.
\pdfinfo{
/TemplateVersion (2026.1)
}

\setcounter{secnumdepth}{2} %May be changed to 1 or 2 if section numbers are desired.

% The file aaai2026.sty is the style file for AAAI Press
% proceedings, working notes, and technical reports.
%

% Title

% Your title must be in mixed case, not sentence case.
% That means all verbs (including short verbs like be, is, using,and go),
% nouns, adverbs, adjectives should be capitalized, including both words in hyphenated terms, while
% articles, conjunctions, and prepositions are lower case unless they
% directly follow a colon or long dash
\title{Stochastic Decentralized Optimization of Non-Smooth Convex and Convex-Concave Problems over Time-Varying Networks}

%Example, Single Author, ->> remove \iffalse,\fi and place them surrounding AAAI title to use it
%\iffalse
%\title{My Publication Title --- Single Author}
%\author {
%    Author Name
%}
%\affiliations{
%    Affiliation\\
%    Affiliation Line 2\\
%   name@example.com
%}
%\fi

%\iffalse
%Example, Multiple Authors, ->> remove \iffalse,\fi and place them surrounding AAAI title to use it
%\title{My Publication Title --- Multiple Authors}
\author {
    % Authors
    Maxim Divilkovskiy\textsuperscript{\rm 1,2}, Alexander Gasnikov\textsuperscript{\rm 2,1,3}
}
\affiliations {
    % Affiliations
    \textsuperscript{\rm 1}MIPT\\
    \textsuperscript{\rm 2}Research Center of the Artificial Intelligence Institute, Innopolis University, Innopolis, Russia\\
    \textsuperscript{\rm 3}ISP RAS
}
%\fi

% REMOVE THIS: bibentry
% This is only needed to show inline citations in the guidelines document. You should not need it and can safely delete it.
%\usepackage{bibentry}
% END REMOVE bibentry

\begin{document}

\maketitle

\begin{abstract}
We study non-smooth stochastic decentralized optimization problems over time-varying networks, where objective functions are distributed across nodes and network connections may intermittently appear or break. Specifically, we consider two settings: (i) stochastic non-smooth (strongly) convex optimization, and (ii) stochastic non-smooth (strongly) convex–(strongly) concave saddle point optimization. Convex problems of this type commonly arise in deep neural network training, while saddle point problems are central to machine learning tasks such as the training of generative adversarial networks (GANs). Prior works have primarily focused on the smooth setting, or time-invariant network scenarios. We extend the existing theory to the more general non-smooth and stochastic setting over time-varying networks and saddle point problems. Our analysis establishes upper bounds on both the number of stochastic oracle calls and communication rounds, matching lower bounds for both convex and saddle point optimization problems.
\end{abstract}

\begingroup
\renewcommand{\thefootnote}{}
\footnotetext{Published in Proceedings of the AAAI Conference on Artificial Intelligence (AAAI-26). \url{https://doi.org/10.1609/aaai.v40i43.41011}}
\endgroup

\section{Introduction}\label{sec:intro}
Distributed optimization is an important area in modern optimization. It has many applications in power control \citep{gan2013optimal}, vehicle control \citep{wang2010}, resource allocation \citep{beck2014allocation}, cooperative optimization \citep{Nedic2009}, and, most notably, machine learning \citep{Rabbat2004,JMLR:v11:forero10a,Galakatos2018}.
The rapid growth in the number of model parameters created a demand for running algorithms on several nodes. Another direction is machine learning with privacy constraints \citep{Ram2009}, which require separating data between servers. We study the decentralized setting of distributed optimization. In this scenario, all the nodes are equal and do not differ from each other. Another property of decentralized optimization is that communication between nodes may not be precisely scheduled.

Decentralized optimization consists in optimizing a function $f$, which can be represented as a sum of functions: $f = \sum_{i=1}^n f_i$, where each function $f_i$ is stored in a distinct node. In the decentralized time-varying setting, connections between nodes may appear or break in the process of optimization. The time-static optimization case is covered extensively in recent works \citep{Gorbunov_2022,dvinskikh2021decentralizedparallelprimaldual,Scaman2017optimal,Scaman2018optimal}, however the development of time-varying optimization started in the recent years. This setting poses more complex communication scheme than time-static one due to instability in connections.

In this research, we focus on the non-smooth stochastic formulation of convex minimization and convex-concave saddle point problems. Algorithms for saddle point problems are motivated by different modern machine learning approaches like GANs \citep{goodfellow2014,gidel2020} and reinforcement learning \citep{Jin2020,omidshafiei2017deepdecentralizedmultitaskmultiagent,wai2019multiagentreinforcementlearningdouble}. Other applications are optimal transport \citep{jambulapati2019directtildeo1epsiloniterationparallel} and economics \citep{facchinei2007finite}. Most recent research on distributed optimization, including saddle point problems assume smoothness of the considered functions \citep{Rogozin_2024,metelev2022decentralizedsaddlepointproblemsdifferent}. In this paper, we do not assume this restriction since without smoothness we can solve larger scope of problems.

The non-smooth setting of convex deterministic decentralized optimization over time-varying graphs was studied in \cite{kovalev2024lower}. In this paper, we extend their algorithm to handle arbitrary stochastic monotone operators. Thus, our contributions include establishing the first optimal convergence rates for stochastic decentralized non-smooth convex problems, as well as for both deterministic and stochastic decentralized non-smooth saddle point problems over time-varying graphs. The contributions are summarized in Table~\ref{tbl:contrib}

\begin{table}[h]
	\centering
	\caption{Summary of contributions.}
	\label{tbl:contrib}
	\begin{tabular}{lcc}
		\toprule
		\textbf{Problem} & \textbf{Deterministic} & \textbf{Stochastic} \\
		\midrule
		Convex & \cite{kovalev2024lower} & \makecell{Theorems~\ref{th:st_mntn},\ref{th:mntn}\\ (this paper)}  \\
		\midrule
		Saddle Point & \makecell{Corollary of \\Theorems~\ref{th:st_mntn},\ref{th:mntn}\\ (this paper)} & \makecell{Theorems~\ref{th:st_mntn},\ref{th:mntn}\\ (this paper)} \\
		\bottomrule
	\end{tabular}
\end{table}

%\paragraph{Thesis organization.} In \Cref{sec:statement}, we formally define the convex and convex-concave problems and introduce the required assumptions. In \Cref{sec:dec}, we present the formal definition of decentralized optimization and the stochastic oracle. In \Cref{sec:algorithm}, we reformulate the problem in a decentralized setting and introduce the algorithm for solving the original problems. \Cref{sec:optimal} states the main theorems of the work. Finally, \Cref{sec:lemmas,sec:proof_st_mntn,sec:proof_mntn} provide auxiliary lemmas and the proofs of the main results.

\section{Problem statement}\label{sec:statement}

We consider the following two stochastic decentralized optimization problems.

\textit{Stochastic decentralized (strongly) convex non-smooth optimization:}
\begin{equation}\label{prob:cvx}
	\min_{x \in \mathbb{R}^d} \left[p(x) = \frac{1}{n}\sum_{i=1}^{n}f_i(x) + \frac{r}{2}\sqn{x}\right].
\end{equation}

\textit{Stochastic decentralized (strongly) convex-(strongly) concave non-smooth optimization:}
\begin{equation}\label{prob:spp}
	\small
	\min_{\xi \in \mathbb{R}^{d_\xi}}\max_{\zeta \in \mathbb{R}^{d_\zeta}} \left[p(\xi, \zeta) = \frac{1}{n}\sum_{i=1}^{n}f_i(\xi, \zeta) + \frac{r}{2}\sqn{\xi} - \frac{r}{2}\sqn{\zeta}\right].
\end{equation}

Throughout the paper, we define the solution space $\mathcal{H}$ as follows: $\mathcal{H} = \mathbb{R}^d$ for problem~\eqref{prob:cvx} and $\mathcal{H} = \mathbb{R}^{d_\xi + d_\zeta}$ for problem~\eqref{prob:spp}. We denote by $\norm{\cdot}$ the standard Euclidean norm in $\mathcal{H}$, and by $\langle \cdot, \cdot \rangle$ the standard inner product in $\mathcal{H}$.

For both problems \eqref{prob:cvx} and \eqref{prob:spp}, we assume $r \geq 0$. When $r > 0$, the problem is referred to as \textit{strongly monotone}; when $r = 0$, it is referred to as \textit{monotone}.

\begin{remark}
	We also study strongly convex-strongly concave problems with different constants of strong convexity and strong concavity:
	\begin{equation}\label{prob:spp_asymmetric}
		p(\xi, \zeta) = \frac{1}{n}\sum_{i=1}^{n}f_i(\xi, \zeta) + \frac{r_\xi}{2}\sqn{\xi} - \frac{r_\zeta}{2}\sqn{\zeta}.
	\end{equation}
	We obtain results for this case as a corollary of the symmetric case.
\end{remark}

To ensure well-posedness of the considered problems and to enable convergence analysis, we impose standard convexity and convexity-concavity conditions on the objective functions:

\begin{assumption}[Convexity for the problem \eqref{prob:cvx}]\label{ass:cvx_convexity}
	Each function \[ f_i(x) : \mathbb{R}^d \rightarrow \mathbb{R} \] is convex in \( x \).
\end{assumption}

\begin{assumption}[Convexity-concavity for the problem \eqref{prob:spp}]\label{ass:spp_convexity}
	Each function \[ f_i(\xi, \zeta) : \mathbb{R}^{d_\xi} \times \mathbb{R}^{d_\zeta} \rightarrow \mathbb{R} \] is convex in \( \xi \) for each fixed \( \zeta \), and concave in \( \zeta \) for each fixed \( \xi \).
\end{assumption}

\begin{remark}
	Any strongly convex or strongly convex-concave problem can be brought into the form of problems \eqref{prob:cvx} and \eqref{prob:spp_asymmetric}, respectively, by appropriately choosing the regularization parameters. In particular, any $\mu-$strongly convex function $f$ can be rewritten as $$\left( f(x) - \frac{\mu}{2} \sqn{x}\right) + \frac{\mu}{2} \sqn{x}.$$ A similar transformation applies to strongly convex–strongly concave functions.
\end{remark}

We further assume the existence of a solution for both problems.

\begin{assumption}[Existence of solution]\label{ass:radius}
	For problems \eqref{prob:cvx}, \eqref{prob:spp} there exists a solution $x^*$ such that, for some distance $R > 0,$
	\begin{equation}
		\norm{x^*} \leq R.
	\end{equation}
\end{assumption}
This assumption is crucial for non-strongly monotone problems, where a solution may not exist in general. In the case of strongly monotone problems, the solution always exists and is unique. We also require this constant $R$ for our convergence analysis.

To unify the analysis of both problems, we define operators associated with each problem.

\begin{definition}\label{def:operators}
	Let $x \in \cH$ be arbitrary. For problem \eqref{prob:cvx}, define the associated operator as
	\begin{equation}
		\mT_i(x) = \partial f_i(x).
	\end{equation}
	For problem \eqref{prob:spp}, define it as
	\begin{equation}
		\mT_i(x) = [\partial_\xi f_i(\xi, \zeta),\ -\partial_\zeta f_i(\xi, \zeta)],
	\end{equation}
	where $x = (\xi, \zeta)$.
\end{definition}

We use the notation \[\mT(x) = \left( \mT_1(x), \ldots, \mT_n(x) \right).\]

This definition allows us to treat both optimization and saddle point problems within a unified analysis. In the convex minimization case \eqref{prob:cvx}, each operator $\mT_i$ coincides with the subdifferential mapping of the corresponding convex function $f_i$, which is a set-valued monotone operator. For the saddle point problem \eqref{prob:spp}, the operator $\mT_i$ collects the partial subdifferentials with respect to the primal variable $\xi$ and the negative dual variable $\zeta$, capturing the first-order stationarity condition. Further, we assume that these operators are bounded, which is equivalent to the Lipschitz continuity of the underlying functions.

\begin{assumption}\label{ass:lipschitz}
	Let $x$ be arbitrary, and let $g_i \in \mT_i(x)$, where $\mT_i$ is defined in Definition~\ref{def:operators}. Then, for all $i \in \{1, \ldots, n\}$, 
	\begin{equation}
		\norm{g_i} \leq M.
	\end{equation}
\end{assumption}

To estimate the convergence rate for these problems, we introduce gap functions for each problem. These gap functions measure how close our result to the solution $x^*$, which exists due to the Assumption~\ref{ass:radius}.

\begin{definition}[Gap function for the problem~\eqref{prob:cvx}]\label{def:gap_cvx}
	\begin{equation}
		G_{\mathrm{CVX}}(x_o) = p(x_o) - p(x^*).
	\end{equation}
\end{definition}

\begin{definition}[Gap function for the problem~\eqref{prob:spp}]\label{def:gap_spp}
	\begin{equation}
		G_{\mathrm{SPP}}(x_o) = p(\xi_o, \zeta^*) - p(\xi^*, \zeta_o),
	\end{equation}
	where $x_o = (\xi_o, \zeta_o), x = (\xi, \zeta)$.
\end{definition}

It is well known that problem \eqref{prob:cvx} is a special case of a problem \eqref{prob:spp}. With introduced gap functions, lower bound for convex optimization will also be the lower bound for saddle point optimization. The upper bound for the saddle point optimization will also be the upper bound for convex optimization. Thus, if both bounds coincide, we can conclude that these gaps are optimal and cannot be improved.

In our convergence rate analysis, we determine the number of communications and oracle calls required to ensure that the expected gap function is bounded by $\varepsilon$. Due to the stochastic nature of the problem, we analyze the expectation of the gap function, meaning the convergence rate guarantees $\E{ G_{\mathrm{CVX}}(x_o)} \leq \varepsilon$ or $\E{G_{\mathrm{SPP}}(x_o)} \leq \varepsilon.$

\section{Stochastic decentralized setting}\label{sec:dec}

The design of deterministic decentralized algorithms for time-static networks is provided in \cite{Scaman2017optimal} for smooth problems and in \cite{Scaman2018optimal} for non-smooth problems. However, their algorithms rely on a dual oracle, which may be inaccessible in practical implementations. In contrast, \cite{kovalev2020optimal} proposed a reformulation of the decentralized problem as a Forward-Backward algorithm, achieving the optimal convergence rate in the time-static scenario while using only a primal oracle.

This idea was later extended to time-varying graphs, attaining optimal convergence rates with a deterministic primal oracle for both smooth \cite{Kovalev2021Lower} and non-smooth \cite{kovalev2024lower} settings.

In our paper, we follow this reformulation, extending the analysis to stochastic primal settings and saddle-point problems.

We start by formalizing the time-varying optimization setting. At a fixed moment of time, a communication network may be represented as a graph $\cG(\cV, \cE),$ where $\cV = \{1, \ldots, n\}$ is a set of nodes and $\cE \subseteq \cV \times \cV$ is a set of edges between these nodes. In our scenario, the connections may change over time. Therefore, we represent the time-varying communication network as a function of a continuous time parameter $\tau > 0$ as a function $\cG(\tau) = (\cV, \cE(\tau))$, where $\cE(\tau): \R_+ \rightarrow 2^{\cV \times \cV}$ denotes the time-varying set of edges.

Next, we formalize the mechanism of node interaction, which is commonly modeled through \textit{gossip matrix} multiplication. In the time-static setting, the gossip matrix remains constant throughout the execution of the algorithm and corresponds to the fixed structure of the underlying communication network. In contrast, the time-varying setting poses additional challenges, as the network topology evolves over time, making it inappropriate to associate a single fixed matrix with the entire process.

We represent the gossip matrix as a matrix-valued function $$\mW(\tau): \R_+ \rightarrow \R^{n \times n},$$ which satisfies Assumption~\ref{ass:gossip}. See Figure~\ref{fig:time-varying-graph} for the example of a time-varying graph.

\begin{figure}[!htpb]
	\centering
	\includegraphics[width=\linewidth]{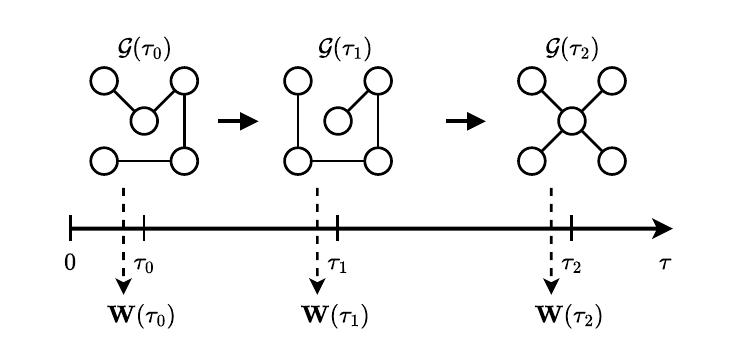}
	\caption{Illustration of a time-varying communication graph. At each moment in time, the associated gossip matrix reflects the current network configuration.}
	\label{fig:time-varying-graph}
\end{figure}

\begin{assumption}\label{ass:gossip}
	For all $\tau \geq 0$, the gossip matrix $\mW(\texe) \in \R^{n\times n}$ associated with the time-varying communication network $\cG(\cV, \cE(\texe))$ satisfies the following properties:
	\begin{enumerate}
		\item $\mW(\texe)_{ij} = 0$ if $i\neq j$ and $(j,i) \notin \cE(\texe)$,
		\item $\mW(\texe) \ones_n = 0$ and $\mW(\texe)^\top \ones_n = 0$.
	\end{enumerate}
\end{assumption}

The first condition encodes the structure of the network. The second implies that the gossip step converges to the average over the whole network. A common choice for the gossip matrix is a Laplacian matrix of a graph $\cG(\tau)$.

For the convergence analysis, we also introduce a condition number $\chi$ for the time-varying network as follows.

\begin{assumption}\label{ass:chi}
	There exists a constant $\chi \geq 1$ such that the following inequality holds for all $\texe \geq 0$:
	\begin{multline}
		\sqn{\mW(\texe) x - x} \leq \left(1 - \frac{1}{\chi}\right)\sqn{x}
		\;\;\\
		\text{for all}
		\;\;
		x \in \left\{x \in \R^n : {\sum_{i=1}^{n}}x_i = 0\right\}.
	\end{multline}
\end{assumption}

The constant $\chi \geq 1$ quantifies the connectivity of the time-varying network. A larger value of $\chi$ indicates poorer connectivity and results in more iterations required for the algorithm to converge.

Next, we define the stochastic decentralized oracle. In the deterministic setting, this coincides with the standard definition of a decentralized first-order oracle. Stochasticity arises from allowing some deviation from the true operator value. We define a first-order oracle for both problems simultaneously by treating it as an oracle that returns a stochastic approximation of the corresponding operator.
\begin{definition}[Stochastic decentralized oracle]
	For arbitrary $x$, let $\mT_i$ be defined as in Definition~\ref{def:operators} for the problems \eqref{prob:cvx} and \eqref{prob:spp}. 
	A random vector $\tilde{\mg}_i(x): \mathcal{H} \rightarrow \mathcal{H}$ is called a stochastic operator oracle associated with operator $\mT_i(x)$ if there exists $\sigma > 0$ such that for any $x \in \mathcal{H}$ the following holds:
	\begin{align}
		& \E{\tilde{\mg}_i(x)} = \mg_i \in \mT_i;\\
		& \E{\sqn{\tilde{\mg}_i(x) - \mg_i}_2} \leq \sigma^2.
	\end{align}
	Here, $\mT_i(x)$ is the true subdifferential, $\mg_i(x)$ is the subgradient and $\tilde{\mg_i}(x)$ is a stochastic subgradient.
\end{definition}

\section{Decentralized reformulation and the Algorithm}\label{sec:algorithm}

\begin{algorithm*}
	\caption{}
	\label{alg}
	\begin{algorithmic}[1]
		\State {\bf input:} $x^0 = x^{-1} = \tilde{x}^0\in \cH^n$, $y^0 = \bar{y}^0\in \cH^n$, $z^0 = \bar{z}^0\in \cL^\perp$, $m^0 \in \cH^n$
		\State {\bf parameters:}
		$K,T \in \{1,2,\ldots\}$,\\ $\{(\alpha_k,\beta_k,\gamma_k,\sigma_k,\lambda_k,\tau_x^k,\eta_x^k,\eta_y^k,\eta_z^k, \theta_z^k)\}_{k=0}^{K-1}\subset \R_+^{10}$
		\For{$k = 0,1,\ldots,K-1$}
		\State $\uln{y}^k = \alpha_k y^k + (1-\alpha_k)\ol{y}^k$,\quad
		$\uln{z}^k = \alpha_k z^k + (1-\alpha_k)\ol{z}^k$
		\label{line:comb}
		\State $g_y^k = \nabla_y G(\uln{y}^k,\uln{z}^k)$,\quad
		$g_z^k = \nabla_z G(\uln{y}^k,\uln{z}^k)$\quad
		% \\\hspace{\algorithmicindent}
		\label{line:grad}
		\State $\tilde{g}_z^k = (\mW(\tau) \otimes\mI_d)g_z^k$,\;\;
		$\hat{g}_z^k =  (\mW(\tau) \otimes\mI_d)(g_z^k + m^k)$
		\label{line:grad_comm}
		\Comment{Decentralized communication}
		\State $y^{k+1} = y^k - \eta_y^k (g_y^k + \hat{x}^{k+1})$,\quad
		$z^{k+1} = z^k - \eta_z^k \hat{g}_z^k$,\quad
		$\hat{x}^{k+1} = x^k + \gamma_k (\tilde{x}^k - x^{k-1})$\label{line:yz}
		\State $\ol{y}^{k+1} = \uln{y}^k + \alpha_k(y^{k+1} - y^k)$,\quad
		$\ol{z}^{k+1} = \uln{z}^k - \theta_z^k \tilde{g}_z^k$,\quad
		$m^{k+1} = (\eta_z^k/\eta_z^{k+1})(m^k +g_z^k  - \hat{g}_z^k)$
		\label{line:ext}
		\State $x^{k,0} = x^k$ \label{line:iinit}
		\For{$t = 0,1,\ldots,T-1$}
		\State $g_x^{k,t} = (\tilde{\mg}_1(x_1^{k,t}),\ldots,\tilde{\mg}_n(x_n^{k,t}))$ \label{line:ig}
		\Comment{Stochastic oracle call}
		\State $x^{k,t+1} = x^{k,t} - \eta_x^k \left(g_x^{k,t} + \beta_k x^{k,t+1} - y^{k+1} + \tau_x^k(x^{k,t+1} - x^k) \right)$ \label{line:ix}
		\EndFor
		\State $x^{k+1} = \sigma_k x^{k,T} + (1-\sigma_k)\tilde{x}^{k+1}$,\quad
		$\tilde{x}^{k+1} = \frac{1}{T}\sum_{t=1}^{T} x^{k,t}$,\quad
		$\ol{x}^{k+1} = \alpha_k \tilde{x}^{k+1} + (1-\alpha_k)\ol{x}^k$
		\label{line:ox}
		\EndFor
		\State $(x_a^K,y_a^K,z_a^K) = (\sum_{k=1}^K \lambda_k)^{-1}\sum_{k=1}^K \lambda_k(\ol{x}^k,\ol{y}^k,\ol{z}^k)$
		\label{line:avg}
		\State {\bf output:} $x_o^K = \frac{1}{n}\sum_{i=1}^{n}x_{a,i}^K \in \cH$,\quad
		where\;$(x_{a,1}^K,\ldots,x_{a,n}^K) = x_a^K \in \cH^n$
		\label{line:output}
	\end{algorithmic}
\end{algorithm*}

A reformulation of the convex problem in the deterministic setting was presented in~\cite{kovalev2024lower}. In this paper, we follow a similar approach but generalize the algorithm to arbitrary monotone operators. Specifically, we extend the algorithm to handle any stochastic monotone operator instead of relying on subgradients of convex functions. Unlike mentioned paper, which is based on a saddle point reformulation of the convex minimization problem, we bypass this step by directly reformulating both problems as a monotone inclusion problems.

First, we perform a standard distributed reformulation. Specifically, denote the consensus space \begin{equation}
	\cL = \{(x_1, \ldots, x_n) \in \cH^n : x_1 = \ldots = x_n\}.
\end{equation}

In the analysis we will also need the fact that \begin{equation}
	\cL^\perp = \{(x_1, \ldots, x_n) \in \cH^n : \sum_{i=1}^n x_i = 0\}.
\end{equation}

Hence, the problem \eqref{prob:cvx} is equivalent to the following problem:
\begin{equation}\label{prob:dec_cvx}
	\min_{x \in \cH} \left[\frac{1}{n}\sum_{i=1}^{n}f_i(x_i) + \frac{r}{2n}\sqn{x}\right] \text{subject to}\quad x \in \cL.
\end{equation}

The problem \eqref{prob:spp} is equivalent to the following problem:
\begin{equation}\label{prob:dec_spp}
	\begin{aligned}
		\min_{\xi \in \mathbb{R}^{d_\xi}} \max_{\zeta \in \mathbb{R}^{d_\zeta}} 
		& \Bigg[ \frac{1}{n} \sum_{i=1}^{n} f_i(\xi_i, \zeta_i) 
		+ \frac{r}{2n} \|\xi\|^2 
		- \frac{r}{2n} \|\zeta\|^2 \Bigg] \\
		& \text{subject to } (\xi, \zeta) = x \in \mathcal{L}.
	\end{aligned}
\end{equation}
Now, we incorporate consensus via communication into the optimization problem. Let $\mT_i$ be defined as in Definition~\ref{def:operators}. Define
\begin{gather}
	\mT(x) = \begin{bmatrix} \mT_1(x_1)\\\vdots\\\mT_n(x_n) \end{bmatrix} + r_x x : \mathcal{H}^n \rightarrow (2^{\mathcal{H}})^n;\\
	G(y, z) = \frac{r_{yz}}{2}\sqn{y+z}: \cH^n \times \cH^n \rightarrow \cH,
\end{gather}
where,
\begin{align*}
	x &= (x_1, \ldots, x_n) \in \mathcal{H}^n, \\
	\text{and}\quad & r_x, r_{yz} > 0 \ \text{satisfy}\ r_x + \frac{1}{r_{yz}} = r.
\end{align*}

Let $\mathsf{E} = \cH^n \times \cH^n \times \cL^\perp$ be an Euclidean space.

Consider the operators
\[A(u) = \begin{bmatrix} 0 \\ \nabla_y G(y,z) \\ \mP \nabla_z G(y,z) \end{bmatrix}; \quad B(u) = \begin{bmatrix} \mT(x) - y \\ x \\ 0 \end{bmatrix},\]
where $$\mP = (\mI_n - (1/n)\ones_n\ones_n^\top)\otimes \mI_d$$ for the problem \eqref{prob:cvx} and $$\mP = (\mI_n - (1/n)\ones_n\ones_n^\top)\otimes \mI_{d_\xi + d_\zeta}$$ for the problem \eqref{prob:spp}. $\mP$ is the orthogonal projection matrix onto $\cL^\perp$. Then, operator $A$ is a monotone operator and corresponds to the gradient of a smooth convex function. Operator $B$ is a monotone set-valued operator.

Consider the following monotone inclusion problem:
\begin{equation}\label{eq:prob_mon}
	\text{find $u \in \mathsf{E}$ such that } 0 \in A(u) + B(u).
\end{equation}

This problem can be solved using the Forward-Backward Algorithm with Nesterov acceleration (see \cite{kovalev2020optimal} for a similar approach). The acceleration relies on the fact that the operator $A$ is a gradient of a smooth function. Since the operator $\mT$ is not a gradient of a smooth function we have to place it into the operator $B$. This reformulation is a key to the presented algorithm. The following theorem establishes the equivalence of this reformulation.

\begin{theorem}\label{th:reformulation}
	Problem \eqref{eq:prob_mon} is equivalent to problems \eqref{prob:cvx} and \eqref{prob:spp} with the corresponding definitions of $\mT_i$ for each of the problems.
\end{theorem}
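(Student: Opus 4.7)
I will prove equivalence by unpacking the inclusion $0\in A(u)+B(u)$ component-wise and showing that its solutions are in one-to-one correspondence with solutions of the original problems (both \eqref{prob:cvx} and \eqref{prob:spp} simultaneously, via the unified operator $\mT$ from Definition~\ref{def:operators}).

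\textbf{Step 1: unfold the inclusion.} Writing $u=(x,y,z)$ with $x,y\in\cH^n$ and $z\in\cL^\perp$, the relation $0\in A(u)+B(u)$ becomes the three conditions $y\in\mT(x)$ (where $\mT(x)$ now includes the $r_x x$ term), $\nabla_y G(y,z)+x=0$, and $\mP\nabla_z G(y,z)=0$. Since $G(y,z)=\tfrac{r_{yz}}{2}\|y+z\|^2$, both gradients equal $r_{yz}(y+z)$, so the second and third conditions read $x=-r_{yz}(y+z)$ and $\mP(y+z)=0$.

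\textbf{Step 2: derive consensus.} The equation $\mP(y+z)=0$ says $y+z\in\cL$ (since $\mP$ is the orthogonal projector onto $\cL^\perp$), and combined with $x=-r_{yz}(y+z)$ this forces $x\in\cL$, i.e.\ $x_1=\cdots=x_n=:x^*$. Thus any solution of \eqref{eq:prob_mon} automatically enforces the consensus constraint built into the distributed reformulations \eqref{prob:dec_cvx} and \eqref{prob:dec_spp}. Moreover, since $z\in\cL^\perp$ we have $\sum_i z_i=0$, and averaging $x=-r_{yz}(y+z)$ yields $x^*=-\tfrac{r_{yz}}{n}\sum_i y_i$.

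\textbf{Step 3: recover the optimality condition.} Using $y_i\in\mT_i(x^*)+r_x x^*$ for each $i$, averaging gives $\tfrac{1}{n}\sum_i y_i\in \tfrac{1}{n}\sum_i\mT_i(x^*)+r_x x^*$, and substituting into the relation from Step 2 produces
\begin{equation*}
0\in \frac{1}{n}\sum_{i=1}^n \mT_i(x^*)+\Bigl(r_x+\frac{1}{r_{yz}}\Bigr)x^*=\frac{1}{n}\sum_{i=1}^n\mT_i(x^*)+r x^*,
\end{equation*}
which by Definition~\ref{def:operators} is exactly the first-order optimality condition for \eqref{prob:cvx} when $\mT_i=\partial f_i$, and the saddle-point optimality condition for \eqref{prob:spp} when $\mT_i$ collects the partial (sub)differentials with the sign flip on the dual block.

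\textbf{Step 4: converse and wrap-up.} Given any $x^*$ satisfying the optimality condition of \eqref{prob:cvx} or \eqref{prob:spp}, I set $x=(x^*,\dots,x^*)\in\cL$ and pick $g_i\in\mT_i(x^*)$ with $\tfrac{1}{n}\sum_i g_i+r x^*=0$; then put $y_i=g_i+r_x x^*$ and $z=-y-x/r_{yz}$. A direct check shows $\sum_i z_i=0$, i.e.\ $z\in\cL^\perp$, and that $(x,y,z)$ satisfies all three conditions of Step 1. This gives the reverse direction.

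\textbf{Main obstacle.} The only subtle point is keeping track of what lives in $\cL$ versus $\cL^\perp$: the projection $\mP$ in the $z$-block of $A$ is precisely what decouples the consensus part of $y+z$ (absorbed into $x$) from its orthogonal complement, and one must verify carefully that the constraint $z\in\cL^\perp$ built into $\mathsf{E}$ is compatible with $\mP\nabla_z G(y,z)=0$ so that no solutions are spuriously added or lost. Once this bookkeeping is done, the equivalence is straightforward.
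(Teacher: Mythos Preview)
Your proposal is correct and follows essentially the same approach as the paper: unfold the inclusion into three conditions, use $\mP(y+z)=0$ to deduce consensus $x\in\cL$, average and exploit $z\in\cL^\perp$ together with $r_x+1/r_{yz}=r$ to recover the first-order optimality condition, then construct $(x,y,z)$ from a given $x^*$ for the converse. Your treatment of the converse direction is in fact more explicit than the paper's (you verify $\sum_i z_i=0$ directly), but the underlying argument is identical.
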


The proof of this theorem is provided in the Supplementary Materials in Section~\ref{sec:proof1}.

Multiplication by the matrix $\mP$ corresponds to projecting onto the consensus space. This, in turn, means averaging values across the entire network, which is challenging in the time-varying setting due to changing connectivity. To address this, the algorithm replaces global averaging via $\mP$ with local averaging using a matrix $\mW$, where each multiplication by this matrix corresponds to averaging over the immediate neighbors of each node in the network graph.

As mentioned earlier, a similar reformulation was introduced in \cite{Kovalev2021Lower}. However, in their setting, the functions were smooth, and thus the operator $\mT$ corresponded to the gradient of a smooth function. When incorporated into the operator $A$, convergence could be accelerated using Nesterov's acceleration. Based on this, they proposed the ADOM algorithm, which achieves the optimal convergence rate for their setting.

In our case, the operator $\mT$ is incorporated into $B$, while iterations over operator $A$ are accelerated using Nesterov's method. The iterations involving operator $B$ cannot be further accelerated, either for convex or saddle-point problems. This aligns with classical results in non-smooth, non-distributed optimization.

With this setup, we show that Algorithm~\ref{alg} converges to the desired solution. The algorithm introduces an additional input variable $m$, corresponding to the error-feedback mechanism. The $y$ and $z$ updates are accelerated using Nesterov’s acceleration.

The inner loop over $T$ corresponds to gradient descent for problem~\eqref{prob:cvx}, and to gradient descent–ascent for problem~\eqref{prob:spp}. The algorithm requires $K$ decentralized communication rounds and $K \times T$ stochastic oracle calls.

\section{Optimal convergence rate}\label{sec:optimal}

In this section we assume that Assumptions~\ref{ass:radius} to \ref{ass:chi} hold. For the problem \eqref{prob:cvx} Assumption~\ref{ass:cvx_convexity} holds, for the problem~\eqref{prob:spp} Assumption~\ref{ass:spp_convexity} holds.

In~\cite{kovalev2024lower}, the authors provide a lower bound for the deterministic non-smooth decentralized convex minimization problem over time-varying networks. By using their analysis and combining it with the classical lower bound for non-smooth convex optimization in the non-distributed setting \citep{bubeck2015convexoptimizationalgorithmscomplexity}, we obtain the following lower bounds.

For concise formulation, we denote the optimality gap \( G \) as
\[
G :=
\begin{cases}
	G_{\mathrm{CVX}}, & \text{for the convex minimization problem \eqref{prob:cvx}}, \\
	G_{\mathrm{SPP}}, & \text{for the saddle-point problem \eqref{prob:spp}}.
\end{cases}
\]

\begin{proposition}[Lower bound for problems \eqref{prob:cvx} and \eqref{prob:spp} in the strongly monotone case]\label{prop:st_mntn}
	Let $r > 0$. Then, for arbitrary $\varepsilon > 0$ there exists an optimization problem and a time-varying network such that Algorithm~\ref{alg} requires at least
	\begin{align*}
		\Omega\left(\frac{\chi M}{\sqrt{r \varepsilon}}\right) \text{ decentralized communications}
	\end{align*}
	and
	\begin{align*}
		\Omega\left(\frac{(M + \sigma)^2}{r \varepsilon}\right) \text{ oracle calls}
	\end{align*}
	to achieve $\E{G(x_o^K)} \leq \varepsilon$.
\end{proposition}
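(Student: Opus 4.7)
The plan is to prove this as a combination of two known lower bounds, since the proposition is an impossibility result that should be derived from existing hard-instance constructions rather than from a fresh analysis of Algorithm~\ref{alg} itself. Because problem \eqref{prob:cvx} is a special case of problem \eqref{prob:spp} (set $d_\zeta = 0$, or treat $\zeta$ as a dummy variable with a trivial concave summand), any lower bound established for the convex minimization instance is automatically a lower bound for the saddle point instance, so it suffices to exhibit hard convex minimization instances in both bounds.

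For the communication lower bound $\Omega(\chi M/\sqrt{r\varepsilon})$, I would invoke the construction from \cite{kovalev2024lower}. That paper builds a time-varying communication network with condition number $\chi$ together with a non-smooth, $M$-Lipschitz, $r$-strongly convex distributed objective such that any first-order decentralized algorithm requires $\Omega(\chi M/\sqrt{r\varepsilon})$ communication rounds to reach an $\varepsilon$-solution in the gap $G_{\mathrm{CVX}}$. The bound is deterministic, but the stochastic setting is strictly harder (a deterministic oracle is a stochastic oracle with $\sigma = 0$), so the same communication lower bound transfers verbatim to our setting.

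For the oracle lower bound $\Omega((M+\sigma)^2/(r\varepsilon))$, I would reduce to the centralized case and invoke the classical Nemirovsky--Yudin / \cite{bubeck2015convexoptimizationalgorithmscomplexity} lower bound for non-smooth, $r$-strongly convex stochastic optimization. Concretely, I would take a trivial network (for example, a single node, or $n$ nodes all assigned the same objective and receiving i.i.d.\ stochastic gradients), so that the decentralized stochastic problem degenerates into a centralized stochastic one; the condition number $\chi$ plays no role in this construction. On this reduced problem, the Bubeck lower bound gives $\Omega(M^2/(r\varepsilon))$ for the pure Lipschitz part and $\Omega(\sigma^2/(r\varepsilon))$ for the noise part, and taking their maximum yields $\Omega((M+\sigma)^2/(r\varepsilon))$ after using $(M+\sigma)^2 \asymp M^2 + \sigma^2$. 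This again lifts to the saddle point gap via the embedding argument above.

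The main obstacle is a bookkeeping one rather than a conceptual one: I need to make sure the two hard instances are simultaneously compatible with our function class (namely Assumptions \ref{ass:radius}--\ref{ass:lipschitz} and the appropriate convexity assumption), and that the Lipschitz constant, strong convexity parameter, and noise variance in each cited lower bound are exactly the $M$, $r$, $\sigma$ appearing in Algorithm~\ref{alg}'s guarantees, so that the two bounds can be added (or, equivalently, taken separately as the number of communication rounds $K$ and the number of oracle calls per round $T$ in Algorithm~\ref{alg}). A minor additional check is that the construction from \cite{kovalev2024lower}, which is stated in terms of the function-value suboptimality, matches our $G_{\mathrm{CVX}}$ and therefore $G$; this is immediate from Definition~\ref{def:gap_cvx}. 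No new hard instance needs to be constructed, and the proposition then follows by taking the worse of the two bounds.
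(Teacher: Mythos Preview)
Your proposal is correct and mirrors the paper's own treatment: the paper does not give a standalone proof of this proposition but instead invokes the deterministic decentralized lower bound of \cite{kovalev2024lower} for the communication complexity and the classical non-distributed non-smooth lower bound of \cite{bubeck2015convexoptimizationalgorithmscomplexity} for the oracle complexity, exactly as you outline. Your reduction from the saddle-point to the convex case and the observation that the deterministic bound transfers to the stochastic setting (since $\sigma=0$ is a special case) are the natural glue, and in fact you spell out more of the details than the paper itself does.
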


\begin{proposition}[Lower bound for problems \eqref{prob:cvx} and \eqref{prob:spp} in the monotone case]\label{prop:mntn}
	Let $r = 0$. Then, for arbitrary $\varepsilon > 0$ there exists an optimization problem and a time-varying network such that Algorithm~\ref{alg} requires at least
	\begin{align*}
		\Omega\left(\frac{\chi MR}{\varepsilon}\right) \text{ decentralized communications}
	\end{align*}
	and
	\begin{align*}
		\Omega\left(\frac{(M + \sigma)^2 R^2}{\varepsilon^2}\right) \text{ oracle calls}
	\end{align*}
	to achieve $\E{G(x_o^K)} \leq \varepsilon$.
\end{proposition}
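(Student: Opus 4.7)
The plan is to obtain the two lower bounds separately by reducing each to a known hard instance — one isolating the communication cost, one isolating the oracle-call cost — and then combining them by choosing, for a given $\varepsilon > 0$, the harder of the two constructions. Since Algorithm~\ref{alg} uses only a stochastic primal operator oracle together with gossip-style communication satisfying Assumptions~\ref{ass:gossip}--\ref{ass:chi}, it belongs to the class of algorithms for which the two existing lower bounds are stated, so both apply simultaneously.

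For the communication bound I would directly invoke the deterministic non-smooth convex construction of \cite{kovalev2024lower}: a family of instances with Lipschitz constant $M$, solution radius $R$, and a sequence of gossip matrices with connectivity parameter $\chi$, on which any first-order decentralized method requires $\Omega(\chi M R/\varepsilon)$ communication rounds to reach accuracy $\varepsilon$ in $G_{\mathrm{CVX}}$. This bound transfers to the stochastic convex setting \eqref{prob:cvx} by simply taking $\sigma = 0$, so the deterministic hard instance is a valid stochastic hard instance. It transfers to the saddle-point setting \eqref{prob:spp} by embedding the hard convex $f_i(\xi)$ as $f_i(\xi,\zeta) := f_i(\xi)$ (equivalently, taking the $\zeta$-coordinate to be redundant); then $G_{\mathrm{SPP}}$ reduces to $G_{\mathrm{CVX}}$ on this instance, and the same communication count applies.

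For the oracle-call bound I would reduce to the centralized case by taking $n = 1$, which makes the consensus and gossip constraints vacuous (and lets $\chi$ degenerate) and leaves a classical non-smooth stochastic convex optimization problem with Lipschitz constant $M$, variance $\sigma^2$, and radius $R$. Applying the standard Nemirovski--Yudin type lower bound recorded in \cite{bubeck2015convexoptimizationalgorithmscomplexity} produces $\Omega(M^2 R^2/\varepsilon^2)$ oracle calls from the deterministic non-smooth part and $\Omega(\sigma^2 R^2/\varepsilon^2)$ from the stochastic noise part; adding them and using the elementary estimate $M^2 + \sigma^2 = \Theta((M+\sigma)^2)$ yields the stated $\Omega((M+\sigma)^2 R^2/\varepsilon^2)$ rate. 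The saddle-point extension proceeds by the same redundant-$\zeta$ embedding as above.

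The main obstacle is to be precise about the oracle model so that Algorithm~\ref{alg} is genuinely covered by the lower bound, rather than escaping it through some stronger access to the functions: one must verify that the first-order/communication oracle of \cite{kovalev2024lower} accommodates stochastic primal oracles in the sense of our Definition of the stochastic decentralized oracle, and that the saddle-point operator $\mT_i$ from Definition~\ref{def:operators} does not grant additional structure on the hard instances constructed. Once this bookkeeping is in place, the hard graph sequence realizing spectral gap $1/\chi$, the Nesterov-style non-smooth function distributed across nodes, and the calibration of its parameters to the target accuracy $\varepsilon$ are all essentially unchanged from the cited references, so no new technical machinery is required beyond the two reductions above and the observation that the two resulting lower bounds are independent and must both be paid by any admissible algorithm.
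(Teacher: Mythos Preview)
Your proposal is correct and matches the paper's approach: the paper does not give a standalone proof of Proposition~\ref{prop:mntn} but simply states that the bounds follow by combining the communication lower bound of \cite{kovalev2024lower} with the classical non-smooth stochastic lower bound of \cite{bubeck2015convexoptimizationalgorithmscomplexity}, which is exactly the two-reduction argument you outline. Your write-up in fact supplies more detail than the paper itself (the $\sigma=0$ specialization, the redundant-$\zeta$ embedding, and the $n=1$ reduction), all of which are the natural ways to make the cited claims precise.
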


These lower bounds match the corresponding lower bounds for the non-distributed setting as well as for deterministic decentralized non-smooth convex optimization.

We now present the following theorems on the convergence rate of Algorithm~\ref{alg}.

\begin{theorem}[Upper bound for problems~\eqref{prob:cvx} and \eqref{prob:spp} in the strongly monotone case]\label{th:st_mntn}
	Let $r > 0$. Then, for arbitrary $\varepsilon > 0$ Algorithm~\ref{alg}, requires 
	\begin{align*}
		\cO\left(\frac{\chi M}{\sqrt{r \varepsilon}}\right) \text{ decentralized communications}
	\end{align*}
	and
	\begin{align*}
		\cO \left(\frac{(M + \sigma)^2}{r \varepsilon}\right) \text{ oracle calls}
	\end{align*}
	to achieve $\E{G(x_o^K)} \leq \varepsilon$.
\end{theorem}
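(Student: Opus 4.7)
My plan is to analyze Algorithm~\ref{alg} as an inexact, stochastic, Nesterov-accelerated forward--backward iteration applied to the monotone inclusion~\eqref{eq:prob_mon} guaranteed by Theorem~\ref{th:reformulation}. The central observation is that smoothness lives entirely in the operator $A$ (whose components are gradients of quadratics with constants controlled by $r_{yz}$ and the spectral data of $\mP$), while non-smoothness, stochasticity, and data-dependence are all confined to the resolvent step on $B$. This separation lets one accelerate only the outer loop on $(y,z)$, exactly mirroring the splitting strategy used in \cite{Kovalev2021Lower, kovalev2024lower}, and it is what produces the $\sqrt{r\varepsilon}$ dependence on the communication side.

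First I would construct a Lyapunov functional of the schematic form
\begin{equation*}
\Phi_k = A_k\,\Psi^{\mathrm{Nest}}_k(y^k,z^k,\ol{y}^k,\ol{z}^k) + B_k\,\sqn{\tilde{x}^k - x^*} + C_k\,\sqn{m^k},
\end{equation*}
where $\Psi^{\mathrm{Nest}}_k$ is the standard accelerated-gradient potential for the smooth strongly monotone outer problem and the coefficients $A_k,B_k,C_k$ grow geometrically at a rate controlled by the strong-monotonicity modulus $r$. The $\sqn{m^k}$ term absorbs the error-feedback variable from Line~\ref{line:ext}, which exists precisely to compensate for the mismatch between the exact consensus projector $\mP$ and the single-step gossip $\mW(\tau)$; here Assumption~\ref{ass:chi} supplies the contraction factor $1-1/\chi$ that makes the telescoping close and, after algebra, inflates the effective smoothness of $A$ by a factor proportional to $\chi^2$.

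The next ingredient is the inner-loop analysis. The $T$ iterations on $x^{k,t}$ in Lines~\ref{line:iinit}--\ref{line:ix} perform stochastic gradient steps on the strongly monotone inclusion $g_x + \beta_k x - y^{k+1} + \tau_x^k(x - x^k) = 0$. Using Assumption~\ref{ass:lipschitz} to bound $\mT$ by $M$ and the oracle variance by $\sigma^2$, standard SGD arguments for strongly monotone inclusions show that $\tilde{x}^{k+1}$ is an inexact resolvent of $B$ with expected squared error of order $(M^2+\sigma^2)/(\tau_x^k T)$, because $\tau_x^k$ supplies contraction while time-averaging damps the noise. This inexactness enters the outer recursion as an additive perturbation of the one-step descent of $\Phi_k$.

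Taking expectations and telescoping the one-step descent from $k = 0$ to $K-1$, the geometric growth of $A_k$ together with the standard Nesterov-with-inexactness calculation yields a bound of the schematic form
\begin{equation*}
\mathbb{E}[G(x_o^K)] \leq \cO\!\left(\frac{\chi^2 M^2}{rK^2}\right) + \cO\!\left(\frac{M^2+\sigma^2}{rKT}\right),
\end{equation*}
where the first term is the accelerated outer-loop rate modulated by the network condition number $\chi$, and the second is the inner-loop SGD error amortised over the $K$ outer iterations. Balancing each term against $\varepsilon/2$ gives $K = \cO(\chi M/\sqrt{r\varepsilon})$ outer iterations (hence communications) and $KT = \cO((M+\sigma)^2/(r\varepsilon))$ oracle calls, matching the announced complexities. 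The hard part will be proving the one-step descent of $\Phi_k$ while simultaneously handling Nesterov extrapolation in the outer loop, the projection error $\mP-\mW(\tau)$ channeled into $m^k$, and the biased inexact resolvent from the inner SGD; each piece is standard in isolation, but the step-size parameters $\alpha_k,\beta_k,\gamma_k,\sigma_k,\eta_x^k,\eta_y^k,\eta_z^k,\theta_z^k,\tau_x^k,\lambda_k$ must all be chosen consistently, and in particular $\tau_x^k$ must be large enough for the inner SGD to contract yet small enough not to overwhelm the effective smoothness constant governing the outer acceleration.
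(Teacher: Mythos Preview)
Your high-level decomposition---accelerated outer loop on $(y,z)$, stochastic inner loop producing an inexact resolvent of $B$, error feedback absorbing the $\mP$-versus-$\mW(\tau)$ mismatch---matches the paper's architecture, and your target bound $\cO(\chi^2 M^2/(rK^2)) + \cO((M^2+\sigma^2)/(rKT))$ is the right one. Two points, however, would not go through as written.

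First, the outer problem is \emph{not} strongly monotone: $G(y,z) = \tfrac{r_{yz}}{2}\sqn{y+z}$ is only convex (degenerate along $y=-z$), so there is no geometric growth of Lyapunov coefficients and no linear rate. The paper runs the convex-case Nesterov schedule $\alpha_k = 3/(k+3)$ and gets a $1/K^2$ rate; the modulus $r$ enters only through the scaling $r_{yz}=3/r$, $\tau_x = r/3$, which is what places $r$ in the denominator of the final bound. Concretely, the paper does not use a contracting Lyapunov potential at all but a convex--concave gap function $\mQ(x,y,z,x_o,y_o,z_o) = \mD(x_o,x) - \langle y,x_o\rangle + \langle y_o,x\rangle - G(y,z) + G(y_o,z_o)$, and Lemma~\ref{lemm:gap_iteration} bounds $\E{\mQ(x,y,z,x_a^K,y_a^K,z_a^K)}$ uniformly over the free variables $(x,y,z)$. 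Your ``geometric growth'' description is inconsistent with your own $1/K^2$ target.

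Second, you pass directly from the lifted bound to $\E{G(x_o^K)}$, but this is a nontrivial step that your plan omits. The output $x_o^K$ is the network average of the $x_{a,i}^K$, and Lipschitzness converts the discrepancy into a consensus term $\sqrt{2n}\,M\,\norm{x_a^K}_{\mP}$. The paper cancels this by \emph{choosing} the free variable $z \in \cL^\perp$ proportional to $\mP x_a^K/\norm{\mP x_a^K}$, and then must separately bound $\sqn{x_a^K - w^*}$ (via a second application of Lemma~\ref{lemm:gap_iteration} with $(y,z)=(y^*,z^*)$ from Lemma~\ref{lemm:optimum}) to control the $\sqn{y}$ term that this choice of $z$ induces through $y = -r_{yz}^{-1}x_a^K - z$. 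Without this maximize-over-free-variables mechanism and the auxiliary distance bound, there is no route from the inclusion-level estimate to the actual saddle-point gap.
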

\begin{theorem}[Upper bound for problems \eqref{prob:cvx} and \eqref{prob:spp} in the monotone case]\label{th:mntn}
	Let $r = 0$. Then, for arbitrary $\varepsilon > 0$ Algorithm~\ref{alg}, requires 
	\begin{align*}
		\cO\left(\frac{\chi MR}{\varepsilon}\right) \text{ decentralized communications}
	\end{align*}
	and
	\begin{align*}
		\cO\left(\frac{(M + \sigma)^2 R^2}{\varepsilon^2}\right) \text{ oracle calls}
	\end{align*}
	to achieve $\E{G(x_o^K)} \leq \varepsilon$.
\end{theorem}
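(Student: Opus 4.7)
The plan is to reduce the monotone case $r=0$ to the strongly monotone case via quadratic regularization and then invoke Theorem~\ref{th:st_mntn}. Concretely, I would fix $r_0 = \Theta(\varepsilon/R^2)$, add $\tfrac{r_0}{2}\sqn{x}$ to the convex objective~\eqref{prob:cvx} (respectively, $\tfrac{r_0}{2}\sqn{\xi} - \tfrac{r_0}{2}\sqn{\zeta}$ to the saddle objective~\eqref{prob:spp}) to obtain an $r_0$-strongly monotone problem $p_{r_0}$, and run Algorithm~\ref{alg} with parameter $r = r_0$. Assumptions~\ref{ass:cvx_convexity}--\ref{ass:chi} carry over to $p_{r_0}$, and a short first-order / monotonicity comparison of $x_{r_0}^*$ with $x^*$ shows $\|x_{r_0}^*\|\leq \|x^*\|\leq R$, so the same radius $R$ can be used.

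Applying Theorem~\ref{th:st_mntn} to $p_{r_0}$ with target accuracy $\varepsilon/2$ yields $\E{G_{r_0}(x_o^K)}\leq \varepsilon/2$ in $\cO(\chi M/\sqrt{r_0\varepsilon}) = \cO(\chi M R/\varepsilon)$ communications and $\cO((M+\sigma)^2/(r_0\varepsilon)) = \cO((M+\sigma)^2 R^2/\varepsilon^2)$ oracle calls, which already matches the announced complexity. It remains to pay the regularization error when passing from $G_{r_0}$ back to $G$.

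For the convex case this pass is a one-line computation:
\[G_{\mathrm{CVX}}(x_o^K) = p_{r_0}(x_o^K) - p_{r_0}(x^*) + \tfrac{r_0}{2}\bigl(\sqn{x^*}-\sqn{x_o^K}\bigr) \leq G_{r_0}(x_o^K) + \tfrac{r_0 R^2}{2},\]
using $p_{r_0}(x_{r_0}^*)\leq p_{r_0}(x^*)$ and $\sqn{x^*}\leq R^2$. The analogous algebra for the saddle case gives
\[G_{\mathrm{SPP}}(x_o^K) \leq \bigl[p_{r_0}(\xi_o^K,\zeta^*) - p_{r_0}(\xi^*,\zeta_o^K)\bigr] + \tfrac{r_0 R^2}{2}.\]
Choosing the constant in $r_0 = \Theta(\varepsilon/R^2)$ so that $\tfrac{r_0 R^2}{2}\leq \varepsilon/2$ makes the total expected gap at most $\varepsilon$.

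The main obstacle is controlling the bracketed term in the saddle case, because Theorem~\ref{th:st_mntn} bounds the gap at the regularized saddle $(\xi_{r_0}^*,\zeta_{r_0}^*)$, whereas $G_{\mathrm{SPP}}$ is evaluated at the unregularized saddle $(\xi^*,\zeta^*)$, and a naive Lipschitz swap between the two picks up a constant $\cO(MR)$ term that does not vanish with $r_0$. I would bypass this either by strengthening Theorem~\ref{th:st_mntn} internally to the sup-type gap $\max_{\|\xi\|,\|\zeta\|\leq R}[p_{r_0}(\xi_o^K,\zeta)-p_{r_0}(\xi,\zeta_o^K)]$, which the Nesterov--Forward--Backward / variational-inequality analysis typically delivers for free, or by exploiting $r_0$-strong monotonicity to first obtain an iterate bound $\E{\sqn{x_o^K - x_{r_0}^*}} \lesssim \varepsilon/r_0$ and then translating it to the $(\xi^*,\zeta^*)$-gap via the $M$-Lipschitz Assumption~\ref{ass:lipschitz} together with $\|x_{r_0}^* - x^*\|\to 0$ as $r_0 \to 0$. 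Either route closes the estimate and yields the complexity claimed in Theorem~\ref{th:mntn}.
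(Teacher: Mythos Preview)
Your plan is exactly the paper's: regularize with $r=\Theta(\varepsilon/R^2)$, apply Theorem~\ref{th:st_mntn} to the regularized problem, and absorb the $O(rR^2)$ regularization error; the convex computation you wrote matches the paper's line-for-line.

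For the saddle case, the paper sidesteps the obstacle you flag without appealing to a sup-type gap or to iterate bounds. The difference is that you drop the nonpositive terms $-\tfrac{r_0}{2}\sqn{\xi_o^K}-\tfrac{r_0}{2}\sqn{\zeta_o^K}$ immediately (which leaves you with the $p_{r_0}$-gap evaluated at the \emph{unregularized} saddle and then the mismatch you describe), whereas the paper keeps those terms and uses them to pass from $(\xi^*,\zeta^*)$ to $(\xi_{r}^*,\zeta_{r}^*)$ in one stroke: starting from
\[
f(\xi_o^K,\zeta^*)-f(\xi^*,\zeta_o^K)
= p(\xi_o^K,\zeta^*)-p(\xi^*,\zeta_o^K)-\tfrac{r}{2}\sqn{\xi_o^K}-\tfrac{r}{2}\sqn{\zeta_o^K}+\tfrac{r}{2}\sqn{x^*},
\]
the paper invokes the saddle definitions of $x^*$ (for $f$) and $x_r^*$ (for $p$) to bound the bracketed quantity directly by $p(\xi_o^K,\zeta_r^*)-p(\xi_r^*,\zeta_o^K)$, which is precisely what Theorem~\ref{th:st_mntn} controls. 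So neither of your two workarounds is needed; the fix is simply not to throw away $-\tfrac{r_0}{2}\sqn{x_o^K}$ before making the $(\xi^*,\zeta^*)\to(\xi_r^*,\zeta_r^*)$ swap.
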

The proofs of these theorems are provided in the Supplementary Materials in Sections~\ref{sec:proof_st_mntn} and~\ref{sec:proof_mntn}.

These upper bounds match the corresponding lower bounds in Propositions~\ref{prop:st_mntn} and \ref{prop:mntn}, thus establishing the optimality of these convergence rates.

In the case of saddle-point problems, the strong convexity and strong concavity constants may differ. The following result addresses this asymmetric setting.

\begin{corollary}[Complexity for saddle point problems with different constants of strong convexity and strong concavity]\label{cor:asym}
	Consider the problem of type \eqref{prob:spp_asymmetric}. Let $r_\xi > 0$, $r_\zeta > 0$. Then, for arbitrary $\varepsilon > 0$, Algorithm~\ref{alg} requires 
	\begin{align*}
		\cO\left(\frac{\chi M}{\sqrt{\varepsilon}} \sqrt{\frac{1}{r_\xi} + \frac{1}{r_\zeta}} \right) \text{ decentralized communications}
	\end{align*}
	and
	\begin{align*}
		\cO \left(\frac{(M + \sigma)^2 }{\varepsilon} \left(\frac{1}{r_\xi} + \frac{1}{r_\zeta}\right) \right) \text{ oracle calls},
	\end{align*} to achieve $\E{G_{\mathrm{SPP}}(x_o^K)} \leq \varepsilon$.
\end{corollary}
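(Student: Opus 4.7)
The plan is to reduce the asymmetric saddle-point problem~\eqref{prob:spp_asymmetric} to the symmetric case~\eqref{prob:spp} covered by Theorem~\ref{th:st_mntn} through an invertible, block-diagonal rescaling of the primal and dual variables, and then to translate the resulting rates back. First, I would introduce new coordinates $\xi' = \sqrt{r_\xi}\,\xi$ and $\zeta' = \sqrt{r_\zeta}\,\zeta$, together with the rescaled local functions $\tilde f_i(\xi',\zeta') \coloneqq f_i(\xi'/\sqrt{r_\xi},\,\zeta'/\sqrt{r_\zeta})$. Under this substitution the objective takes the symmetric form
\[
\tilde p(\xi',\zeta') = \frac{1}{n}\sum_{i=1}^n \tilde f_i(\xi',\zeta') + \frac{1}{2}\sqn{\xi'} - \frac{1}{2}\sqn{\zeta'},
\]
so the rescaled problem is an instance of~\eqref{prob:spp} with $r = 1$. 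Convexity–concavity of each $\tilde f_i$ is preserved, since the change of variables is linear and does not mix the primal and dual blocks.

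Next, I would track how the constants of Assumption~\ref{ass:lipschitz} and of the stochastic oracle transform. By the chain rule, $\nabla_{\xi'}\tilde f_i = (1/\sqrt{r_\xi})\nabla_\xi f_i$ and $\nabla_{\zeta'}\tilde f_i = (1/\sqrt{r_\zeta})\nabla_\zeta f_i$, so writing the original operator as $g_i = [g_i^\xi,-g_i^\zeta]$ one obtains
\[
\sqn{\tilde g_i'} = \frac{1}{r_\xi}\sqn{g_i^\xi} + \frac{1}{r_\zeta}\sqn{g_i^\zeta} \le \max\!\left(\frac{1}{r_\xi},\frac{1}{r_\zeta}\right)\sqn{g_i} \le M^2\,\max\!\left(\frac{1}{r_\xi},\frac{1}{r_\zeta}\right).
\]
Applying the same componentwise decomposition to the noise $\tilde g_i' - \E{\tilde g_i'}$ gives a variance bound of $\sigma^2\max(1/r_\xi,1/r_\zeta)$. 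Therefore the rescaled problem satisfies our assumptions with effective constants $\tilde M = M\sqrt{\max(1/r_\xi,1/r_\zeta)}$ and $\tilde\sigma = \sigma\sqrt{\max(1/r_\xi,1/r_\zeta)}$. The saddle-point gap is invariant under this change of variables: since the rescaling sends $(\xi^*,\zeta^*)$ to $(\sqrt{r_\xi}\xi^*,\sqrt{r_\zeta}\zeta^*)$ and $\tilde p$, $p$ agree up to the substitution, one has $\tilde G_{\mathrm{SPP}}(x'_o) = G_{\mathrm{SPP}}(x_o)$ whenever $x_o$ and $x'_o$ are corresponding iterates.

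Finally, I would invoke Theorem~\ref{th:st_mntn} on the rescaled problem with parameters $(\tilde M,\tilde\sigma,r = 1)$. This yields $\cO(\chi \tilde M/\sqrt{\varepsilon})$ communications and $\cO((\tilde M+\tilde\sigma)^2/\varepsilon)$ oracle calls to drive $\E{\tilde G_{\mathrm{SPP}}}\le\varepsilon$, and hence $\E{G_{\mathrm{SPP}}}\le\varepsilon$ in the original coordinates. Substituting the expressions for $\tilde M$ and $\tilde\sigma$, and using the elementary equivalence $\max(a,b)\le a+b\le 2\max(a,b)$ to replace $\max(1/r_\xi,1/r_\zeta)$ by $1/r_\xi + 1/r_\zeta$ up to a constant factor, recovers exactly the bounds stated in the corollary. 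There is no deep technical obstacle in this argument; the only point requiring care is verifying that the rescaling preserves every structural assumption the algorithm depends on, which the componentwise analysis above handles uniformly for the operator-boundedness, the stochastic-oracle variance, and the gap function.
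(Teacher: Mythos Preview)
Your proposal is correct and follows essentially the same rescaling argument as the paper's own proof: introduce $\xi' = \sqrt{r_\xi}\,\xi$, $\zeta' = \sqrt{r_\zeta}\,\zeta$ to reduce to the symmetric case with $r=1$, track the transformed Lipschitz and variance constants, and invoke Theorem~\ref{th:st_mntn}. The only cosmetic difference is that the paper bounds each block of the rescaled operator by $M$ separately to obtain $\tilde M = M\sqrt{1/r_\xi + 1/r_\zeta}$ directly, whereas you first derive the slightly sharper $\tilde M = M\sqrt{\max(1/r_\xi,1/r_\zeta)}$ and then relax it via $\max(a,b)\le a+b$; both yield the same $\cO$-bound.
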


The proof of this corollary is provided in the Supplementary Materials in Section~\ref{sec:proof_asym}. %TODO: reference

\section{Experiments}\label{sec:experiments}

We conduct experiments on synthetic random graphs constructed using the Erdős–Rényi model. We consider both time-static and time-varying settings. Our evaluation focuses on two metrics: the Euclidean distance to the objective, \begin{align*}
	\norm{x_o^K - x^*},
\end{align*} and the saddle point residual, \begin{align*}
	f(\xi_o^K, \zeta^*) - f(\xi^*, \zeta_o^K).
\end{align*}

We begin with the random graph setup. In the time-static scenario, the graph remains fixed throughout the algorithm's execution. In contrast, the time-varying scenario involves randomly removing and adding edges at each iteration. Starting graph is shown in Figure~\ref{fig:graph}.

\begin{figure}[!htpb]
	\centering
	\includegraphics[width=0.4\linewidth]{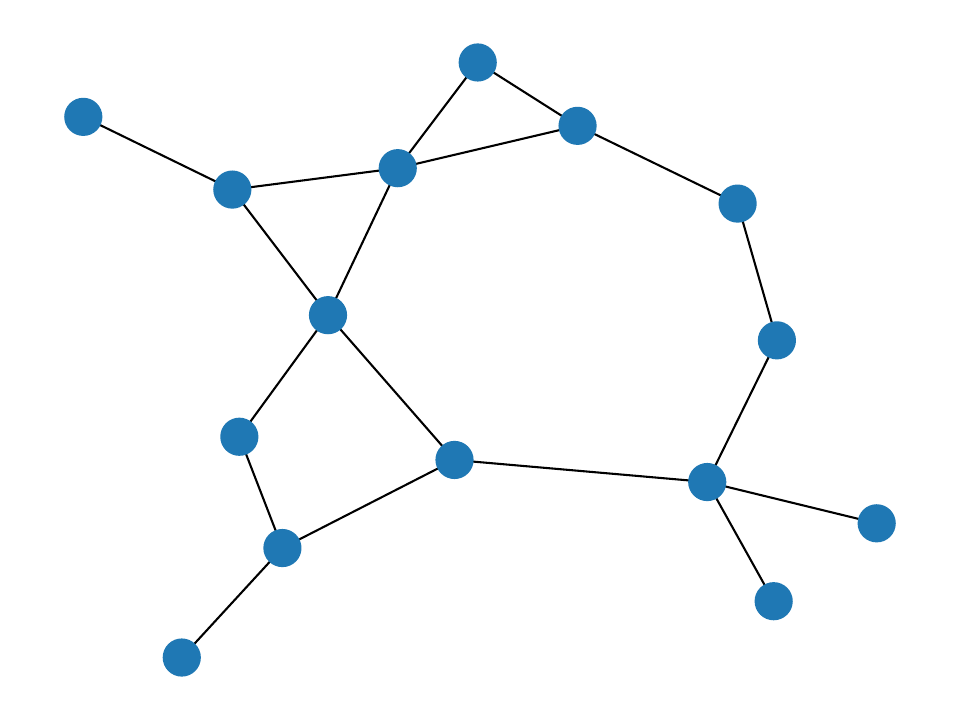}
	\caption{Example of a random graph with 15 nodes.}
	\label{fig:graph}
\end{figure}

We test our method on a simple non-smooth saddle-point problem of the form:
\begin{align*}
	\min_{\xi \in \mathbb{R}^{d_\xi}} \max_{\zeta \in \mathbb{R}^{d_\zeta}} p(\xi, \zeta) = \sum_{i=1}^n f_i(\xi, \zeta) + \frac{r}{2} \|\xi\| - \frac{r}{2} \|\zeta\|,
\end{align*}
where $r = 10^{-3}$ and
\begin{align*}
	f_i(\xi, \zeta) = \|\xi - c_{\xi, i}\|_1 - \|\zeta - c_{\zeta, i}\|_1,
\end{align*}
with $c_{\xi, i}$ and $c_{\zeta, i}$ some fixed constants.

These functions are convex in \( \xi \) and concave in \( \zeta \). The subgradients with respect to \( \xi \) and \( \zeta \) are computed separately.

Subgradient with respect to $\xi$:
\begin{align*}
	\partial_\xi f_i(\xi, \zeta) &= \partial \|\xi - c_{\xi, i}\|_1 \\
	&= \left\{ v \in \mathbb{R}^{d_\xi} \;\middle|\; v_j \in 
	\begin{cases}
		\{\operatorname{sign}(\xi_j - c_{\xi, i, j})\}, \\ 
		\quad \xi_j \neq c_{\xi, i, j}, \\
		[-1, 1], \\ 
		\quad \xi_j = c_{\xi, i, j}
	\end{cases} 
	\right\}.
\end{align*}

Subgradient with respect to $\zeta$:
\begin{align*}
	\partial_\zeta f_i(\xi, \zeta) &= -\partial \|\zeta - c_{\zeta, i}\|_1 \\
	&= \left\{ -v \in \mathbb{R}^{d_\zeta} \;\middle|\; v_j \in 
	\begin{cases}
		\{\operatorname{sign}(\zeta_j - c_{\zeta, i, j})\}, \\
		\quad \zeta_j \neq c_{\zeta, i, j}, \\
		[-1, 1], \\
		\quad \zeta_j = c_{\zeta, i, j}
	\end{cases} 
	\right\}.
\end{align*}

Then, we can write operator $\mT_i$ from the Definition~\ref{def:operators} of $f_i(\xi, \zeta)$ as a pair:
\[
\mT_i(\xi, \zeta) = \left( \partial_\xi f_i(\xi, \zeta), \; -\partial_\zeta f_i(\xi, \zeta) \right).
\]

We evaluate the performance for $K \in \{1, \ldots, 30\}$, with a fixed parameter $T = 10$.

\begin{figure}[!htpb]
	\centering
	\includegraphics[width=0.9\linewidth]{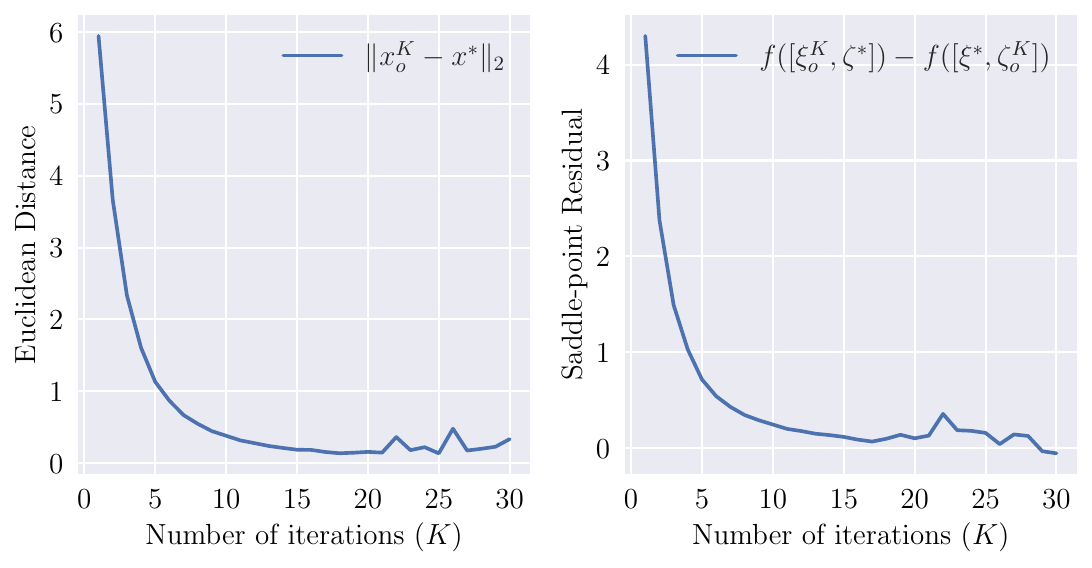}
	\caption{Performance over time-varying graphs.}
	\label{fig:time-static}
\end{figure}

\begin{figure}[!htpb]
	\centering
	\includegraphics[width=0.9\linewidth]{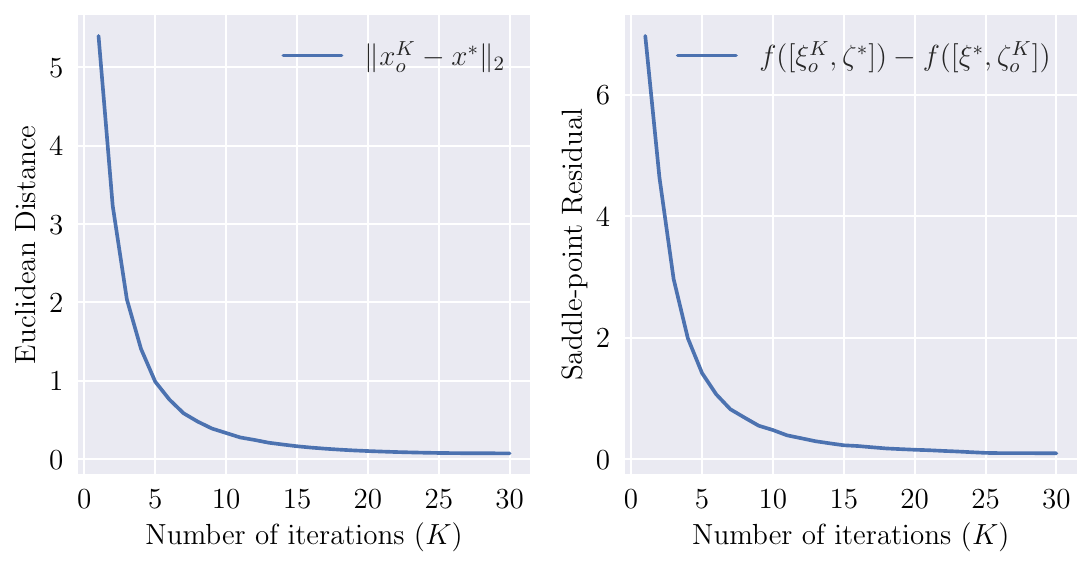}
	\caption{Performance over time-static graphs.}
	\label{fig:time-varying}
\end{figure}

Figure~\ref{fig:time-static} presents the performance of the method under the time-varying graph setting. As observed, the randomness in the graph structure introduces some fluctuations in both performance metrics. Conversely, Figure~\ref{fig:time-varying} illustrates the results under the time-static setting, where the graph remains fixed. In this case, we observe a more stable and monotonic decrease in error across iterations.

\section{Conclusion}\label{sec:conclusion}

In this paper, we investigate non-smooth stochastic decentralized optimization over time-varying networks, addressing both convex minimization and convex-concave saddle point problems. Our research extends previous work that focused primarily on smooth settings or time-static networks; thus we provide a more general result. We study both monotone and strongly monotone scenarios. We consider both monotone (weakly convex/concave) and strongly monotone problems, as well as the asymmetric case where the strong convexity and strong concavity parameters differ.

We generalize the deterministic algorithm from \cite{kovalev2024lower} to handle arbitrary stochastic monotone operators, making it applicable to a broader class of real-world problems. Our main theorems establish optimal convergence rates (matching theoretical lower bounds) for both convex minimization and saddle point problems. Moreover, these rates are optimal for the deterministic case and the non-distributed case as well.

A possible future direction may be studying the case of asymmetric oracle cost. In saddle point problems, the computational cost of querying the convex (primal) and concave (dual) oracles may differ (e.g., in GANs). Extending our analysis could result in a better convergence rate with this assumption.

\section*{Acknowledgements}
The study was supported by the Ministry of Economic Development of the Russian Federation (agreement No. 139-10-2025-034 dd. 19.06.2025, IGK000000C313925P4D0002)

\appendix
\onecolumn

\setcounter{equation}{0}
\renewcommand{\theequation}{S\arabic{equation}}

\section{Proof of Theorem 1}\label{sec:proof1}
We start by showing that the solution to the problem~\eqref{eq:prob_mon} is a solution to the original problems. From $0 \in A(u) + B(u),$ we have
\begin{gather*}
		0 \in \mT(x) - y;\\
		0 = r_{yz} (y + z) + x;\\
		0 = \mP r_{yz} (y + z),
\end{gather*}
rearranging, we get
\begin{gather*}
		y \in \mT(x);\\
		y = \frac{-x}{r_{yz}} - z;\\
		r_{yz} (y+z) \in \cL,
\end{gather*}
substituting $y$ into first row, using the definition of $\mT$ and using $x = -r_{yz}(y+z)$ we get
\begin{gather*}
	-z \in [\mT_i(x_i)]_{i=1}^n + r_x x + \frac{x}{r_{yz}};\\
	x \in \cL,
\end{gather*}
using $r = r_x + 1/r_{yz}$ we obtain
\begin{gather*}
		-z \in [\mT_i(x_i)]_{i=1}^n + r x;\\
		x \in \cL.
\end{gather*}

From this point, the proof splits depending on whether the problem is convex or saddle point optimization.

Convex case:

Summing the first inclusion over \( i = 1, \ldots, n \) and using \( z \in \cL^\perp \), \( x \in \cL \), we obtain
\begin{align*}
	0 \in \sum_{i=1}^n \partial f_i(x_1) + n r x_1 = n \partial p(x_1).
\end{align*}

Hence,
\begin{align*}
	0 \in \partial p(x_1).
\end{align*}
Thus, $x_1$ is a solution to the problem~\eqref{prob:cvx} by optimality condition for convex functions.

Saddle point case:

Similarly, summing over $i = 1 \ldots n$ yields
\begin{align*}
	0 \in \begin{pmatrix}
		\partial_{\xi} f_i(\xi_1, \zeta_1)\\
		-\partial_{\zeta} f_i(\xi_1, \zeta_1)
	\end{pmatrix} + n r \begin{pmatrix}
		\xi_1\\\zeta_1
	\end{pmatrix}.
\end{align*}
Thus, combining both parts for $\xi$ and $\zeta$ we obtain the optimality condition for convex-concave functions.

To see that the solution $x$ of the original problems solves the problem~\eqref{eq:prob_mon} it is sufficient to take any $z \in \cL$ and $y = \frac{-x}{r_{yz}} - z$, which concludes the proof. \hfill $\square$

\section{Auxiliary lemmas}\label{sec:lemmas}
In this section, we focus on proving auxiliary lemmas for Theorems~\ref{th:st_mntn} and~\ref{th:mntn} for the saddle point optimization problem. Note that the upper bounds established automatically imply the corresponding upper bounds for the convex optimization case, due to the structure of the gap functions and the problems structure.

We assume that Assumptions~\ref{ass:spp_convexity} to~\ref{ass:chi} hold. For the convergence analysis of Algorithm~\ref{alg}, we require the following auxiliary lemmas and definitions.

\begin{definition}
	For the problem~\ref{prob:spp} define
	\begin{equation}
		F(\xi, \zeta) = \sum_{i=1}^{n} f_i(\xi_i, \zeta_i) + \frac{r_x}{2} \sqn{\xi} - \frac{r_x}{2} \sqn{\zeta};
	\end{equation}
	\begin{equation}
		\mD(x_o, x) = F(\xi_o, \zeta) - F(\xi, \zeta_o),
	\end{equation}
	where $x = (\xi, \zeta), x_o = (\xi_o, \zeta_o).$
	Note that $\mD(x_o, x)$ is convex in $x_o$ and concave in $x$.
	
	Also, define
	\begin{equation}
		\mQ(x, y, z, x_o, y_o, z_o) = \mD(x_o, x) - \langle y, x_o \rangle + \langle y_o, x \rangle - G(y, z) + G(y_o, z_o).
	\end{equation}
\end{definition}

We also set the parameters for the Algorithm~\ref{alg}. Firstly, we take
\begin{align}
	&r_x = \frac{2}{3}r,\quad r_{yz} = \frac{3}{r};\label{eq:rx}\\
	&\tau_x = \frac{1}{2} r_x,\quad \eta_y = (4 r_{yz})^{-1},\quad \eta_z = (10 r_{yz} \chi^2)^{-1}\label{eq:tau_eta}.
\end{align}

Next, for $k \in \left\{0 \ldots K - 1\right\}$:
\begin{align}
	&\alpha_k = \frac{3}{k + 3},\quad \gamma_k = \frac{k+2}{k+3};\label{eq:alpha_gamma}\\
	&\tau_x^k = \tau_x \alpha_k^{-1},\quad \eta_y^k = \eta_y \alpha_k^{-1},\quad \eta_z^k = \eta_z \alpha_k^{-1},\quad \eta_x^k = \frac{1}{\tau_x^K T};\label{eq:tau_eta_k}\\
	&\beta_k = r_x,\quad \sigma_k = \frac{\tau_x^k}{2 \tau_x^k + \beta_k},\quad \theta_z^k = \frac{1}{2r_{yz}}.\label{eq:beta_sigma_theta}\\
\end{align}

Set the parameters $\lambda_k$:
\begin{align}\label{eq:lambda}
	&\lambda_k = \alpha_{k-1}^{-2} + \alpha_k^{-1} - \alpha_k^{-2} \text{ for $k = 1, \ldots, K-1$},\quad \lambda_K = \alpha_{K-1}^{-2}.
\end{align}

Also, set the input to the Algorithm~\ref{alg}:
\begin{align}\label{eq:input}
	x^0 = 0,\quad y^0 = 0,\quad z^0 = 0,\quad m^0 = 0.
\end{align}

\begin{lemma}\label{lemm:subgrad}
	Let $r > 0$. For any $x \in \cH$, $k \in \{0, \ldots, K\}$ and $t \in \{0, \ldots, T-1\}$ the following inequality holds:
	\begin{align*}
		&\Ec{\langle \beta_k{x^{k, t+1}, x - x^{k, t+1}} \rangle}{\cF_{k,t}} - \Ec{\langle g^{k,t}_x, x^{k,t+1} - x \rangle}{\cF_{k,t}} \leq \\
		&\Ec{-\mD(x^{k, t+1}, x)}{\cF_{k,t}} - \frac{r_x}{2} \Ec{\sqn{x - x^{k, t+1}}}{\cF_{k,t}}\\
		&+ n\eta_x^k (3M + \sigma)^2/2 + \frac{1}{2\eta_x^k}\Ec{\sqn{x^{k,t} - x^{k,t+1}}}{\cF_{k,t}},
	\end{align*}
	where \(\cF_{k,t}\) is a sigma-algebra representing the history of all random variables generated by Algorithm~\ref{alg} up to inner iteration \(t\) of outer iteration \(k\):
	\[
	\cF_{k,t} := \sigma\Big(
		x^{k,0}, \ldots, x^{k,t},\;
		g_x^{k,0}, \ldots, g_x^{k,t-1},\;
		y^0,\ldots,y^{k+1},\;
		z^0,\ldots,z^{k+1},\;
		m^0,\ldots,m^{k+1}
	\Big).
	\]
\end{lemma}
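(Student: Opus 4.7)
The strategy is first to convert the $\mD$-based left-hand side into a pure sum of function differences via an exact algebraic identity, and then to control the residual with a standard stochastic subgradient argument based on oracle unbiasedness.

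The first step is to establish the identity
\begin{equation*}
\mD(x^{k,t+1},x) + \langle \beta_k x^{k,t+1}, x-x^{k,t+1}\rangle + \frac{r_x}{2}\sqn{x-x^{k,t+1}} = \sum_{i=1}^{n}\bigl[f_i(\xi_i^{k,t+1},\zeta_i) - f_i(\xi_i,\zeta_i^{k,t+1})\bigr].
\end{equation*}
This follows from the definition of $F$ (in particular the $\frac{r_x}{2}\sqn{\xi}-\frac{r_x}{2}\sqn{\zeta}$ regularizers), the polarization identity $\langle u, v-u\rangle = \tfrac{1}{2}(\sqn{v} - \sqn{u} - \sqn{v-u})$, and the parameter choice $\beta_k = r_x$, which makes all quadratic contributions cancel cleanly. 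After rearrangement, proving the lemma reduces to showing that
\begin{equation*}
\Ec{\sum_{i=1}^{n}\bigl[f_i(\xi_i^{k,t+1},\zeta_i) - f_i(\xi_i,\zeta_i^{k,t+1})\bigr] + \langle g_x^{k,t}, x - x^{k,t+1}\rangle}{\cF_{k,t}} \leq \frac{n\eta_x^k(3M+\sigma)^2}{2} + \frac{1}{2\eta_x^k}\Ec{\sqn{x^{k,t}-x^{k,t+1}}}{\cF_{k,t}}.
\end{equation*}

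Next, I would telescope each summand through the anchor $x_i^{k,t}$. The middle piece $f_i(\xi_i^{k,t},\zeta_i) - f_i(\xi_i,\zeta_i^{k,t})$ is bounded, via the convex–concave subgradient inequality at $x_i^{k,t}$, by $\langle g_i, x_i^{k,t} - x_i\rangle$, where I pick $g_i = \Ec{\tilde{\mg}_i(x_i^{k,t})}{\cF_{k,t}} \in \mT_i(x_i^{k,t})$ so that the concatenation equals $g^{k,t} := \Ec{g_x^{k,t}}{\cF_{k,t}}$. The two outer pieces are controlled by Assumption~\ref{ass:lipschitz}, which implies $\|\partial_\xi f_i\|,\|\partial_\zeta f_i\| \leq M$; combined with $\|\xi_i^{k,t+1}-\xi_i^{k,t}\| + \|\zeta_i^{k,t+1}-\zeta_i^{k,t}\| \leq 2\|x_i^{k,t+1}-x_i^{k,t}\|$ and Cauchy–Schwarz over $i$, they sum to at most $2M\sqrt{n}\,\|x^{k,t+1}-x^{k,t}\|$. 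I then split $\langle g_x^{k,t}, x - x^{k,t+1}\rangle = \langle g_x^{k,t}, x - x^{k,t}\rangle + \langle g_x^{k,t}, x^{k,t} - x^{k,t+1}\rangle$; upon conditioning, $\Ec{\langle g_x^{k,t} - g^{k,t}, x - x^{k,t}\rangle}{\cF_{k,t}}$ vanishes by unbiasedness (since $x$ and $x^{k,t}$ are $\cF_{k,t}$-measurable), leaving $\Ec{2M\sqrt{n}\,\|x^{k,t+1}-x^{k,t}\| + \langle g_x^{k,t}, x^{k,t}-x^{k,t+1}\rangle}{\cF_{k,t}}$ to be bounded.

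Finally, I would apply the Cauchy–Schwarz inequality for $L^2$ random variables to the stochastic inner product, together with the bias–variance estimate $\Ec{\sqn{g_x^{k,t}}}{\cF_{k,t}} \leq \sqn{g^{k,t}} + n\sigma^2 \leq nM^2 + n\sigma^2 \leq n(M+\sigma)^2$. This produces a combined coefficient $\sqrt{n}(2M + (M+\sigma)) = \sqrt{n}(3M+\sigma)$ in front of $\sqrt{\Ec{\sqn{x^{k,t+1}-x^{k,t}}}{\cF_{k,t}}}$, and one further Young's inequality with weight $\eta_x^k$ separates this into exactly the two terms appearing on the right-hand side of the lemma. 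The delicate point of the argument is the algebraic identity in the first step: once the $\mD$-plus-regularization target is reduced to a pure sum of function differences, the remaining stochastic bookkeeping is routine. A secondary subtlety is that one must commit to the specific subgradient $g_i = \Ec{\tilde{\mg}_i(x_i^{k,t})}{\cF_{k,t}}$ rather than an arbitrary element of $\mT_i(x_i^{k,t})$, which is permitted because $\mT_i$ is set-valued and is precisely what makes the cross term vanish in conditional expectation.
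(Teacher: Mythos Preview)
Your argument is correct and uses the same core ingredients as the paper---the convex--concave subgradient inequality at the anchor $x^{k,t}$, Lipschitz continuity of each $f_i$ to shift from $x^{k,t}$ to $x^{k,t+1}$, unbiasedness of the oracle to kill the noise cross term, and Young's inequality to split the final product---so the two proofs are essentially the same. The organizational difference is that you first isolate the exact algebraic identity
\[
\mD(x^{k,t+1},x)+\langle\beta_k x^{k,t+1},x-x^{k,t+1}\rangle+\tfrac{r_x}{2}\sqn{x-x^{k,t+1}}=\sum_i\bigl[f_i(\xi_i^{k,t+1},\zeta_i)-f_i(\xi_i,\zeta_i^{k,t+1})\bigr]
\]
and only then run the stochastic estimate, whereas the paper starts from the stochastic inner product, bounds it, and afterwards adds the $\beta_k$ term and collapses the quadratics via the parallelogram rule; this is purely a reordering. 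One small point where your route is actually cleaner: you control $\Ec{\langle g_x^{k,t},x^{k,t}-x^{k,t+1}\rangle}{\cF_{k,t}}$ via the $L^2$ Cauchy--Schwarz inequality together with $\Ec{\sqn{g_x^{k,t}}}{\cF_{k,t}}\le n(M+\sigma)^2$, which is fully justified by the variance assumption, while the paper's step~(a) appears to invoke a pointwise bound $\|\omega_i\|\le\sigma$ that the hypotheses do not provide. Your slightly looser constant $2M$ (in place of the paper's $\sqrt{2}M$) for the Lipschitz shift is harmless, since both routes arrive at the same $(3M+\sigma)$ coefficient.
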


\begin{proof}
	We start by upper bounding the following term:
	\begin{align*}
		& -\langle g^{k,t}_x, x^{k,t+1} - x \rangle \\
		&= -\langle \tilde{g}(x^{k,t}), x^{k,t+1} - x \rangle \\
		&= \langle \tilde{g}(x^{k,t}), x - x^{k,t+1} \rangle \\
		&= \sum_{i=1}^n \langle \tilde{g}_i(x^{k,t}), x_i - x_i^{k,t+1} \rangle \\
		&= \sum_{i=1}^n \langle g_i(x_i^{k,t}) + \omega(x_i^{k,t}), x_i - x_i^{k,t+1} \rangle \\
		&= \sum_{i=1}^n \langle g_i(x_i^{k,t}), x_i - x_i^{k,t+1} \rangle + \langle g_i(x_i^{k,t}), x_i^{k,t+1} - x_i^{k, t} \rangle \\
		&\quad + \langle \omega(x_i^{k,t}), x_i - x_i^{k,t} \rangle + \langle \omega(x_i^{k,t}), x_i^{k,t+1} - x_i^{k,t} \rangle \\
		&\aleq{uses upper bounds on $\omega$ and $g_i$} \sum_{i=1}^n \langle g_i(x_i^{k,t}), x_i - x_i^{k,t} \rangle + (M + \sigma) \norm{x_i^{k,t} - x_i^{k,t+1}} \\
		&\quad + \langle \omega(x_i^{k,t}), x_i^{k,t} - x \rangle,
	\end{align*}
	where \annotate.
	
	Note that $\Ec{\langle w(x_i^{k, t}), x_i^{k, t} - x \rangle}{\cF_{k,t}} = 0$. Then, taking expectation conditioned on $\cF_{k,t}$:
	\begin{align*}
		\mathbb{E}& \left[ - \langle g^{k,t}_x, x^{k,t+1} - x \rangle \,\middle|\, \cF_{k,t} \right]
		\leq \sum_{i=1}^n \langle g_i(x_i^{k,t}), x_i - x_i^{k,t} \rangle + (M + \sigma) \, \mathbb{E} \left[ \|x_i^{k,t} - x_i^{k,t+1}\| \,\middle|\, \cF_{k,t} \right]\\
		&\aleq{uses subgradient inequality for convex-concave functions}\sum_{i=1}^n (f_i(\xi, \zeta_i^{k, t}) - f_i(\xi_i^{k, t}, \zeta)) + (M + \sigma) \, \mathbb{E} \left[ \|x_i^{k,t} - x_i^{k,t+1}\| \,\middle|\, \cF_{k,t} \right],\\
		&\aleq{uses Lipschitz continuity of $f_i$} \sum_{i=1}^n \left(f_i(\xi, \zeta_i^{k, t+1}) - f_i(\xi_i^{k, t+1}, \zeta)\right) + M \, \mathbb{E} \left[ \|\xi_i^{k,t} - \xi_i^{k,t+1}\| +\|\zeta_i^{k,t} - \zeta_i^{k,t+1}\| \,\middle|\, \cF_{k,t} \right] \\
		&\quad + \sigma \, \mathbb{E} \left[ \|x_i^{k,t} - x_i^{k,t+1}\| \,\middle|\, \cF_{k,t} \right], \\
		&\aleq{uses the property of Euclidean norm}\sum_{i=1}^n \Ec{f_i(\xi, \zeta_i^{k, t+1}) - f_i(\xi_i^{k, t+1}, \zeta)}{\cF_{k,t}} + (M + \sqrt{2}M + \sigma) \, \mathbb{E} \left[ \|x_i^{k,t} - x_i^{k,t+1}\| \,\middle|\, \cF_{k,t} \right]\\
		&\leq\sum_{i=1}^n \Ec{f_i(\xi, \zeta_i^{k, t+1}) - f_i(\xi_i^{k, t+1}, \zeta)}{\cF_{k,t}} + (3M + \sigma) \, \mathbb{E} \left[ \|x_i^{k,t} - x_i^{k,t+1}\| \,\middle|\, \cF_{k,t} \right],
	\end{align*}
	where \annotate.
	
	Adding the term $\Ec{\langle \beta_k{x^{k, t+1}, x - x^{k, t+1}} \rangle}{\cF_{k,t}}$ to the both sides we get
	\begin{align*}
		&\Ec{\langle \beta_k{x^{k, t+1}, x - x^{k, t+1}} \rangle}{\cF_{k,t}} - \Ec{\langle g^{k,t}_x, x^{k,t+1} - x \rangle}{\cF_{k,t}}\\
		&\leq \Ec{F(\xi, \zeta^{k, t+1}) - F(\xi^{k, t+1}, \zeta)}{\cF_{k,t}} - \frac{r_x}{2}\Ec{\sqn{\xi} - \sqn{\zeta^{k, t+1}} - \sqn{\xi^{k, t+1}} + \sqn{\zeta}}{\cF_{k,t}}\\
		&+ \Ec{\langle \beta_k{x^{k, t+1}, x - x^{k, t+1}} \rangle}{\cF_{k,t}} + (3M + \sigma) \sum_{i=1}^n \Ec{\norm{x_i^{k,t} - x_i^{k,t+1}}}{\cF_{k,t}}\\
		&\aleq{uses the parallelogram rule and the definition of $\beta_k$}\Ec{F(\xi, \zeta^{k, t+1}) - F(\xi^{k, t+1}, \zeta)}{\cF_{k,t}} - \frac{r_x}{2}\Ec{\sqn{x} - \sqn{x^{k, t+1}}}{\cF_{k,t}}\\
		&+ \frac{r_x}{2}\Ec{-\sqn{x^{k, t+1}} - \sqn{x^{k, t+1} - x} + \sqn{x}}{\cF_{k,t}}\\
		&+ (3M + \sigma) \sum_{i=1}^n \Ec{\norm{x_i^{k,t} - x_i^{k,t+1}}}{\cF_{k,t}}\\
		&=\Ec{F(\xi, \zeta^{k, t+1}) - F(\xi^{k, t+1}, \zeta)}{\cF_{k,t}} - \frac{r_x}{2} \Ec{\sqn{x - x^{k, t+1}}}{\cF_{k,t}}\\
		&+ (3M + \sigma) \sum_{i=1}^n \Ec{\norm{x_i^{k,t} - x_i^{k,t+1}}}{\cF_{k,t}},
	\end{align*}
	where \annotate.
	
	Using the definition of $\mD(x_o, x)$ we obtain
	\begin{align*}
		&\Ec{\langle \beta_k{x^{k, t+1}, x - x^{k, t+1}} \rangle}{\cF_{k,t}} - \Ec{\langle g^{k,t}_x, x^{k,t+1} - x \rangle}{\cF_{k,t}}\\
		&\Ec{-\mD(x^{k, t+1}, x)}{\cF_{k,t}} - \frac{r_x}{2} \Ec{\sqn{x - x^{k, t+1}}}{\cF_{k,t}}\\
		&+ (3M + \sigma) \sum_{i=1}^n \Ec{\norm{x_i^{k,t} - x_i^{k,t+1}}}{\cF_{k,t}}\\
		&=\Ec{-\mD(x^{k, t+1}, x)}{\cF_{k,t}} - \frac{r_x}{2} \Ec{\sqn{x - x^{k, t+1}}}{\cF_{k,t}}\\
		&+ \sum_{i=1}^n (3M + \sigma) \Ec{\norm{x_i^{k,t} - x_i^{k,t+1}}}{\cF_{k,t}}\\
		&=\Ec{-\mD(x^{k, t+1}, x)}{\cF_{k,t}} - \frac{r_x}{2} \Ec{\sqn{x - x^{k, t+1}}}{\cF_{k,t}}\\
		&+ \sum_{i=1}^n \eta_x^k \sqrt{\eta_x^k}(3M + \sigma) \frac{1}{\sqrt{\eta_x^k}}\Ec{\norm{x_i^{k,t} - x_i^{k,t+1}}}{\cF_{k,t}}\\
		&\aleq{uses the Young's inequality}\Ec{-\mD(x^{k, t+1}, x)}{\cF_{k,t}} - \frac{r_x}{2} \Ec{\sqn{x - x^{k, t+1}}}{\cF_{k,t}}\\
		&+ \sum_{i=1}^n \eta_x^k (3M + \sigma)^2/2 + \frac{\Ec{\norm{x_i^{k,t} - x_i^{k,t+1}}}{\cF_{k,t}}^2}{2\eta_x^k}\\
		&\aleq{uses the Jensen's inequality and the property of Euclidean norm}\Ec{-\mD(x^{k, t+1}, x)}{\cF_{k,t}} - \frac{r_x}{2} \Ec{\sqn{x - x^{k, t+1}}}{\cF_{k,t}}\\
		&+ n\eta_x^k (3M + \sigma)^2/2 + \frac{1}{2\eta_x^k}\Ec{\sqn{x^{k,t} - x^{k,t+1}}}{\cF_{k,t}},
	\end{align*}
	where \annotate.
	This concludes the proof.
\end{proof}

\begin{lemma}\label{lemm:optimum}
	Let $r > 0$. Then the function $\mQ(x, y, z, x_o, y_o, z_o)$ is convex in $x_o, y_o, z_o$ and concave in $x, y, z$. Moreover, for a fixed solution $x^*$ of the saddle point problem \eqref{prob:spp}, there exist $w^*, y^*, z^*$, such that the following conditions hold:
	\begin{gather}
		0 \in \partial_x \mQ(w^*, y^*, z^*, w^*, y^*, z^*),\quad 0 \in \partial_{x_o} \mQ(w^*, y^*, z^*, w^*, y^*, z^*);\\
		0 = \nabla_y \mQ(w^*, y^*, z^*, w^*, y^*, z^*),\quad 0 = \nabla_{y_o} \mQ(w^*, y^*, z^*, w^*, y^*, z^*);\\
		\nabla_z \mQ(w^*, y^*, z^*, w^*, y^*, z^*) \in \cL,\quad \nabla_{z_o} \mQ(w^*, y^*, z^*, w^*, y^*, z^*)\in \cL;\\
		\sqn{w^*} \leq \frac{2nM^2}{r^2}, \quad \sqn{y^*} \leq 2\left(1 + \frac{r_x}{r}\right)^2nM^2, \quad \sqn{z^*} \leq 8nM^2.
	\end{gather}
\end{lemma}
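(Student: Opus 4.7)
The proof naturally breaks into three pieces: convex--concavity of $\mQ$, existence of the saddle triple, and its norm estimates. Convex--concavity follows by inspection: $\mD(x_o,x)$ is convex in $x_o$ and concave in $x$ (as already noted), the crossed inner products $-\langle y,x_o\rangle$ and $\langle y_o,x\rangle$ are bilinear and so affine in each separated block of variables, and $G(y,z)=\tfrac{r_{yz}}{2}\sqn{y+z}$ is convex, which makes $-G(y,z)$ concave in $(y,z)$ and $G(y_o,z_o)$ convex in $(y_o,z_o)$. Aggregating, $\mQ$ is convex in $(x_o,y_o,z_o)$ and concave in $(x,y,z)$.

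For the existence of $(w^*,y^*,z^*)$, the key step is to feed the optimality condition for~\eqref{prob:spp} into the monotone-inclusion reformulation established in Theorem~\ref{th:reformulation}. Let $x^*\in\cH$ be a solution of~\eqref{prob:spp} (which exists by Assumption~\ref{ass:radius}), set $w^*:=(x^*,\ldots,x^*)\in\cL$, and invoke the aggregated stationarity condition $0\in\sum_{i=1}^n\mT_i(x^*)+nrx^*$ derived in the proof of Theorem~\ref{th:reformulation} to select $g_i^*\in\mT_i(x^*)$ with $\sum_i g_i^*=-nrx^*$. Then define
\begin{equation*}
y^*:=(g_i^*+r_x x^*)_{i=1}^n,\qquad z^*:=-y^*-w^*/r_{yz},
\end{equation*}
so that the identity $r=r_x+1/r_{yz}$ forces $\sum_i z_i^*=nrx^*-nr_x x^*-nx^*/r_{yz}=0$, i.e.\ $z^*\in\cL^\perp$. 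Checking the six optimality conditions then reduces to short differentiations: $\partial_{x_o}\mD(w^*,w^*)\ni[\mT_i(x^*)+r_x x^*]_i=y^*$ gives $0\in\partial_{x_o}\mQ$, and the $\partial_x$ case is symmetric; the smooth identity $r_{yz}(y^*+z^*)=-w^*$ makes $\nabla_y\mQ=\nabla_{y_o}\mQ=0$; and $w^*\in\cL$ delivers $\nabla_z\mQ=w^*\in\cL$ and $\nabla_{z_o}\mQ=-w^*\in\cL$.

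The norm bounds follow from Assumption~\ref{ass:lipschitz}. Since $\norm{g_i^*}\le M$, the averaged identity $rx^*=-n^{-1}\sum_i g_i^*$ yields $\norm{x^*}\le M/r$ and hence $\sqn{w^*}=n\sqn{x^*}\le nM^2/r^2\le 2nM^2/r^2$; the componentwise estimate $\norm{y_i^*}\le M+r_x\norm{x^*}\le M(1+r_x/r)$ gives $\sqn{y^*}\le nM^2(1+r_x/r)^2\le 2nM^2(1+r_x/r)^2$; and the triangle inequality $\norm{z^*}\le\norm{y^*}+\norm{w^*}/r_{yz}$ together with the parameter choice $r_x=2r/3$, $r_{yz}=3/r$ from~\eqref{eq:rx} produces $\norm{z^*}\le\sqrt{n}M(1+r_x/r+1/(rr_{yz}))=2\sqrt{n}M$, so $\sqn{z^*}\le 4nM^2\le 8nM^2$.

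The main obstacle is the selection of $g_i^*$. The inclusion $y^*\in\mT(w^*)$ by itself leaves $\sum_i y_i^*$ in an essentially arbitrary direction, and a naive choice produces $z^*\notin\cL^\perp$, i.e.\ a candidate outside the ambient space $\mathsf{E}=\cH^n\times\cH^n\times\cL^\perp$. The cure is to insist on the specific selection coming from the \emph{aggregated} stationarity condition $\sum_i g_i^*=-nrx^*$ of the original saddle-point problem, which, via the algebra $r=r_x+1/r_{yz}$, is precisely what forces $\sum_i z_i^*=0$ and places the triple in $\mathsf{E}$.
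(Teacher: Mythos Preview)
Your proposal is correct and follows essentially the same construction as the paper: set $w^*=(x^*,\ldots,x^*)$, choose selections $g_i^*\in\mT_i(x^*)$ satisfying the aggregated optimality condition $\sum_i g_i^*=-nrx^*$, and define $y_i^*=g_i^*+r_xx^*$, $z_i^*=-y_i^*-x^*/r_{yz}$ (the paper writes the same $z_i^*$ as $-g_i^*-rx^*$, which is identical via $r=r_x+1/r_{yz}$). The only cosmetic differences are that the paper works componentwise in $(\xi,\zeta)$ rather than through the operator $\mT_i$, and that for the bound on $\sqn{z^*}$ the paper argues directly from $z_i^*=-g_i^*-rx^*$ (parameter-free), whereas you route through the triangle inequality on $\norm{y^*}+\norm{w^*}/r_{yz}$ and plug in the specific values from~\eqref{eq:rx}; both are valid in context.
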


\begin{proof}
	$\mD(x_o, x)$ is convex in $x_o$, $G(y_o, z_o)$ is convex in $y_o, z_o$, $\langle y_o, x \rangle$ and $\langle y, x_o \rangle$ are affine and thus both convex and concave. Hence, $\mQ$ is convex in $x_o, y_o, z_o$ and concave in $x, y, z$.
	
	Take $x^* = (\xi^*, \zeta^*) \in \cH$, which is the unique solution to the problem \eqref{prob:spp} since $r > 0$. Define $w^* = \left(x^*, \ldots, x^*\right) \in \cL$.
	\begin{align*}
		0 \in \partial p(x^*) = \frac{1}{n}\sum_{i = 1}^{n} \partial f_i(x^*) + r\xi^* - r\zeta^*.
	\end{align*}
	
	Hence, there exists a vector $\Delta^*$, such that $\Delta_i^* \in \partial f_i(x^*)$ and
	\begin{equation}\label{eq:xstar}
		r\xi^* - r\zeta^* + \frac{1}{n} \sum_{i = 1}^{n} \Delta_i^* = 0.
	\end{equation}
	
	Decompose each $\Delta_i^* = (\Delta_i^{\xi, *}, \Delta_i^{\zeta, *})$, where
	\begin{equation*}
		\Delta_i^{\xi, *} \in \partial_{\xi} f_i(x^*) \quad \text{and} \quad \Delta_i^{\zeta, *} \in \partial_{\zeta} f_i(x^*).
	\end{equation*}
	
	Define $y^* = (y^*_1, \ldots, y^*_n) \in \cH^n$, where for each $i$
	\begin{equation*}
		y_i^* = \begin{pmatrix}\Delta_i^{\xi, *} + r_x \xi^*\\ -\Delta_i^{\zeta, *} + r_x \zeta^*\end{pmatrix}.
	\end{equation*}
	
	Define $z^* = (z^*_1, \ldots, z^*_n)$, where for each $i$
	\begin{equation*}
		z_i^* = \begin{pmatrix}-\Delta_i^{\xi, *} - r \xi^*\\ \Delta_i^{\zeta, *} - r \zeta^*\end{pmatrix}.
	\end{equation*}
	
	From the definition of $\mD$ we have
	\begin{align*}
		&\partial_{x_o} \mD(w^*, w^*) = (
		\partial_{\xi_o} F(w^*),
		-\partial_{\zeta_o} F(w^*))\\
		&= 
		\begin{pmatrix}
			(\partial_{\xi_{o, 1}}f_1(x^*) + r_x \xi^*,
			-\partial_{\zeta_{o, 1}} f_1(x^*) + r_x \zeta^*)\\
			\vdots\\
			(\partial_{\xi_{o, n}}f_n(x^*) + r_x \xi^*,
			-\partial_{\zeta_{o, n}} f_n(x^*) + r_x \zeta^*)
		\end{pmatrix}.
	\end{align*}
	
	Hence, $y^* \in \partial_{x_o} \mD(w^*, w^*)$. Then,
	\begin{align*}
		&0 \in \partial_{x_o} (\mD(w^*, w^*) - \langle y^*, \cdot \rangle);\\
		&\partial_{x_o} \mQ(w^*, y^*, z^*, w^*, y^*, z^*) = \partial_{x_o} \mD(x_o, w^*) - y^*;\\
		&0 \in \partial_{x_o} \mQ(w^*, y^*, z^*, w^*, y^*, z^*).
	\end{align*}
	
	Note that $z^* \in \cL^\perp$.
	
	We have
	\begin{gather*}
		\nabla_{z_o} \mQ(w^*, y^*, z^*, w^*, y^*, z^*) = \nabla_{z_o} G(y^*, z^*) = r_{yz} (y^* + z^*).
	\end{gather*}

	Examining the $i-$th component, we have
	\begin{gather*}
		r_{yz}\begin{pmatrix}
			\Delta_i^{\xi, *} + r_x \xi^* -\Delta_i^{\xi, *} - r \xi^*\\
			-\Delta_i^{\zeta, *} + r_x \zeta^* + \Delta_i^{\zeta, *} - r \zeta^*
		\end{pmatrix} = r_{yz}\begin{pmatrix}
			(r_x - r) \xi^*\\
			(r_x - r) \zeta^*
		\end{pmatrix} = r_{yz}(r_x - r) x^*.
	\end{gather*}

	Hence,
	$\nabla_{z_o} \mQ(w^*, y^*, z^*, w^*, y^*, z^*) = -w^* \in \cL.$
	
	Similarly, we obtain
	\begin{gather*}
		\nabla_{y_o} \mQ(w^*, y^*, z^*, w^*, y^*, z^*) = \nabla_{y_o} G(y^*, z^*) + w^* = -w^* + w^* = 0.
	\end{gather*}
	
	By symmetry and similar derivations, the same conditions hold for the variables $x, y, z$.
	
	To obtain norm bounds we start by $\norm{\Delta_i} \leq M$ from Assumption~\ref{ass:lipschitz}. From~\eqref{eq:xstar} we get that
	\begin{align*}
		\norm{\xi^*} \leq \frac{M}{r};\\
		\norm{\zeta^*} \leq \frac{M}{r}.
	\end{align*}

	Combining, $\sqn{x^*} = \sqn{\xi^*} + \sqn{\zeta^*} \leq \frac{2M^2}{r^2}$. Hence,
	\begin{equation*}
		\sqn{w^*} \leq \frac{2nM^2}{r^2}.
	\end{equation*}
	
	Similarly from the definitions of $y^*$ and $z^*$ we get
	\begin{equation*}
		\sqn{y^*} \leq 2(1 + r_x/r)^2nM^2,
	\end{equation*}
	and
	\begin{equation*}
		\sqn{z^*} \leq 8nM^2,
	\end{equation*}
	which concludes the proof.
\end{proof}

\begin{lemma}\label{lemm:iteration}
	Let $r > 0$. For any $x \in \cH$ and $k \in \{0, \ldots, K-1\}$ the following inequality holds:
	\begin{align*}
		(\tau_x^k + \frac{1}{2} r_x) &\E{\sqn{x^{k+1} - x}}\\
		&\leq \tau_x^k \E{\sqn{x^k - x}} - \E{\mD(\tilde{x}^{k+1}, x)} + \E{\langle y^{k+1}, \tilde{x}^{k+1} - x \rangle}\\
		&- \frac{\tau_x^k}{2} \E{\sqn{\tilde{x}^{k+1} - x^k}} + \frac{n (3M + \sigma)^2}{2 \tau_x^k T}.
	\end{align*}
\end{lemma}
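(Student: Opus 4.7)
The plan is to analyze the inner $T$-loop iteration-by-iteration via the implicit update rule in line~\ref{line:ix}, telescope in $t$, and then combine the running average $\tilde{x}^{k+1}$ with the endpoint $x^{k,T}$ through the convex combination in line~\ref{line:ox}. Rewriting line~\ref{line:ix} as
\[
\frac{x^{k,t} - x^{k,t+1}}{\eta_x^k} = g_x^{k,t} + \beta_k x^{k,t+1} - y^{k+1} + \tau_x^k(x^{k,t+1} - x^k),
\]
I would take inner product with $x - x^{k,t+1}$, isolate the combination $\langle \beta_k x^{k,t+1}, x - x^{k,t+1}\rangle - \langle g_x^{k,t}, x^{k,t+1} - x\rangle$ appearing on the left of Lemma~\ref{lemm:subgrad}, and expand the remaining two inner products using the three-point identity once on the $x^{k,t}-x^{k,t+1}$ factor and once on the $x^{k,t+1} - x^k$ factor. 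Applying Lemma~\ref{lemm:subgrad} then gives a per-iteration inequality in which the $\frac{1}{2\eta_x^k}\sqn{x^{k,t}-x^{k,t+1}}$ term produced by the three-point identity exactly cancels the one produced by the lemma.

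Next, I take expectations via the tower property and sum over $t = 0,\ldots,T-1$. The $\frac{1}{2\eta_x^k}\sqn{x^{k,t}-x}$ contributions telescope, leaving only boundary terms at $t=0$ and $t=T$; using $x^{k,0} = x^k$ together with the parameter choice $\eta_x^k = 1/(\tau_x^k T)$ (so that $\frac{1}{2\eta_x^k} = \frac{\tau_x^k T}{2}$), the initial boundary term combines with the accumulated $\frac{T\tau_x^k}{2}\sqn{x^k-x}$ into $T\tau_x^k\,\E{\sqn{x^k-x}}$, while the stochastic noise aggregates to $\frac{n(3M+\sigma)^2}{2\tau_x^k}$. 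Dividing the resulting inequality by $T$ and applying Jensen's inequality to pass from time-averages to $\tilde{x}^{k+1}$ (by convexity of $\mD(\cdot,x)$ in its first argument, convexity of $\sqn{\cdot}$, and linearity of $\langle y^{k+1},\cdot\rangle$) yields
\begin{align*}
& \tfrac{\tau_x^k}{2}\E{\sqn{x^{k,T}-x}} + \left(\tfrac{\tau_x^k}{2}+\tfrac{r_x}{2}\right)\E{\sqn{\tilde{x}^{k+1}-x}} + \tfrac{\tau_x^k}{2}\E{\sqn{\tilde{x}^{k+1}-x^k}} + \E{\mD(\tilde{x}^{k+1},x)} \\
& \quad - \E{\langle y^{k+1},\tilde{x}^{k+1}-x\rangle} \leq \tau_x^k \E{\sqn{x^k-x}} + \tfrac{n(3M+\sigma)^2}{2\tau_x^k T}.
\end{align*}

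Finally, to collapse the two squared-norm terms on the left into the desired $(\tau_x^k + r_x/2)\E{\sqn{x^{k+1}-x}}$, I would invoke convexity of $\sqn{\cdot}$ together with the update $x^{k+1} = \sigma_k x^{k,T} + (1-\sigma_k)\tilde{x}^{k+1}$ from line~\ref{line:ox} to write $(\tau_x^k + r_x/2)\sqn{x^{k+1}-x} \leq (\tau_x^k + r_x/2)\sigma_k \sqn{x^{k,T}-x} + (\tau_x^k + r_x/2)(1-\sigma_k)\sqn{\tilde{x}^{k+1}-x}$. A short calculation with $\beta_k = r_x$ and $\sigma_k = \tau_x^k/(2\tau_x^k+\beta_k)$ gives $(\tau_x^k + r_x/2)\sigma_k = \tau_x^k/2$ and $(\tau_x^k + r_x/2)(1-\sigma_k) = (\tau_x^k + r_x)/2$, so the two LHS coefficients match exactly; rearranging yields the claim. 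The main obstacle is precisely this final bookkeeping: $\sigma_k$ is tuned so that convexity of $\sqn{\cdot}$ converts the weighted combination of $\sqn{x^{k,T}-x}$ and $\sqn{\tilde{x}^{k+1}-x}$ cleanly into $(\tau_x^k + r_x/2)\sqn{x^{k+1}-x}$, and tracking all coefficients consistently through the three-point identities, the telescoping, the division by $T$, and the Jensen step is the one place where a small miscount would break the argument.
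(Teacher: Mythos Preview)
Your proposal is correct and follows essentially the same route as the paper: rewrite the implicit update, apply Lemma~\ref{lemm:subgrad} conditionally, telescope in $t$, divide by $T$, and use Jensen's inequality together with the convex combination $x^{k+1}=\sigma_k x^{k,T}+(1-\sigma_k)\tilde{x}^{k+1}$. In fact you make the final step explicit---the paper simply writes ``using the definition of $\eta_x^k$ and $x^{k+1}$'' where you spell out that $(\tau_x^k+r_x/2)\sigma_k=\tau_x^k/2$ and $(\tau_x^k+r_x/2)(1-\sigma_k)=(\tau_x^k+r_x)/2$ make the convexity bound on $\sqn{x^{k+1}-x}$ match the two left-hand coefficients exactly.
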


\begin{proof}
	\begin{align*}
		\frac{1}{2\eta_x^k} ||&x^{k, t+1} - x||^2 \\
		&\aeq{uses the parallelogram rule}      \frac{1}{2\eta_x^k} ||x^{k, t} - x||^2 - \frac{1}{2\eta_x^k}||x^{k, t+1}-x^{k,t}||^2 - \beta_k\langle x^{k, t+1}, x^{k, t+1} - x \rangle\\
		&- \tau_x^k \langle x^{k, t+1} - x^{k}, x^{k, t+1} - x \rangle + 
		\langle y^{k+1}, x^{k, t+1} - x \rangle - \langle g_x^{k,t}, x^{k,t+1} - x \rangle \\
		& \aleq{uses Cauchy-Schwarz inequality}  \frac{1}{2\eta_x^k} ||x^{k, t} - x||^2 - \frac{1}{2\eta_x^k}||x^{k, t+1}-x^{k,t}||^2 - \frac{\tau_x^k}{2} ||x^{k, t+1} - x^k||^2 - \frac{\tau_x^k}{2} ||x^{k, t+1} - x||^2
		\\&+ \frac{\tau_x^k}{2} ||x^k - x||^2 + 
		\langle y^{k+1}, x^{k, t+1} - x \rangle - \beta_k\langle x^{k, t+1}, x^{k, t+1} - x \rangle - \langle g_x^{k,t}, x^{k,t+1} - x \rangle,
	\end{align*}
	where \annotate.
	
	Define
	\begin{align*}
		S_{t} := \frac{1}{2\eta_x^k} ||x^{k, t} - x||^2 + \frac{\tau_x^k}{2} ||x^k - x||^2
	\end{align*}

	Thus,
	\begin{align*}
		\frac{1}{2\eta_x^k} ||&x^{k, t+1} - x||^2 \leq S_t - \frac{\tau_x^k}{2} ||x^{k, t+1} - x^k||^2 \\&- \frac{\tau_x^k}{2} ||x^{k, t+1} - x||^2 + \langle y^{k+1}, x^{k, t+1} - x \rangle  - \frac{1}{2\eta_x^k}||x^{k, t+1}-x^{k,t}||^2\\
		&+ \beta_k \langle x^{k, t+1}, x - x^{k, t+1} \rangle - \langle g_x^{k,t}, x^{k,t+1} - x \rangle,
	\end{align*}
	
	Taking 	\[
		\cF_{k,t} := \sigma\Big(
			x^{k,0}, \ldots, x^{k,t},\;
			g_x^{k,0}, \ldots, g_x^{k,t-1},\;
			y^0,\ldots,y^{k+1},\;
			z^0,\ldots,z^{k+1},\;
			m^0,\ldots,m^{k+1}
		\Big),
		\]
	and applying conditional expectation $\Ec{\cdot}{\cF_{k, t}}$ and using Lemma~\ref{lemm:subgrad} we get
	\begin{align*}
		\frac{1}{2\eta_x^k} &\Ec{\sqn{x^{k, t+1} - x}}{\cF_{k,t}} \leq S_t + \Ec{- \frac{\tau_x^k}{2} ||x^{k, t+1} - x^k||^2 - \frac{\tau_x^k}{2} ||x^{k, t+1} - x||^2}{\cF_{k,t}} \\&+ \Ec{\langle y^{k+1}, x^{k, t+1} - x \rangle}{\cF_{k,t}} - \Ec{\mD(x^{k,t+1}, x)}{\cF_{k,t}}\\
		&- \frac{r_x}{2} \Ec{\sqn{x - x^{k, t+1}}}{\cF_{k,t}}
		+ n\eta_x^k (3M + \sigma)^2/2\\
		&\leq \Ec{-\frac{\tau_x^k}{2} \sqn{x^{k, t+1} - x^{k}} - \frac{\tau_x^k + r_x}{2} \sqn{x^{k, t+1} - x}}{\cF_{k,t}} \\
		&+ \Ec{\langle y^{k+1}, x^{k, t+1} - x \rangle - \mD(x^{k, t+1}, x)}{\cF_{k,t}}\\ 
		&+ n\eta_x^k (3M + \sigma)^2/2 + S_t.
	\end{align*}
	
	Summing inequalities for $t = 0, \ldots, T-1$ we get
	\begin{align*}
		&\sum_{i=1}^{T} \frac{1}{2\eta_x^k} \Ec{\sqn{x^{k, i} - x}}{\cF_{k,i-1}}\\
		&\leq \sum_{i=0}^{T-1} \frac{1}{2\eta_x^k} \Ec{\sqn{x^{k, i} - x}}{\cF_{k,i-1}} + \frac{T \tau_x^k}{2} \sqn{x^k - x}\\
		&+\sum_{i=1}^{T} \Ec{-\frac{\tau_x^k}{2} \sqn{x^{k, i} - x^{k}} - \frac{\tau_x^k + r_x}{2} \sqn{x^{k, i} - x}}{\cF_{k,i-1}}\\
		&+ \Ec{\langle y^{k+1}, x^{k, i} - x \rangle - \mD(x^{k, i}, x)}{\cF_{k,i-1}}\\
		&+T n\eta_x^k (3M + \sigma)^2/2.
	\end{align*}
	
	Now, applying expectation to both sides and using the fact that $\E{\Ec{Z}{\cdot}} = \E{Z}$ we get
	\begin{align*}
		&\sum_{i=1}^{T} \frac{1}{2\eta_x^k} \E{\|x^{k,i} - x\|^2} \leq
		\sum_{i=0}^{T-1} \frac{1}{2\eta_x^k} \E{\|x^{k,i} - x\|^2}
		+ \frac{T \tau_x^k}{2} \|x^k - x\|^2 \\
		& + \sum_{i=1}^{T} \E{
			-\frac{\tau_x^k}{2} \|x^{k,i} - x^k\|^2
			-\frac{\tau_x^k + r_x}{2} \|x^{k,i} - x\|^2
			+ \langle y^{k+1}, x^{k,i} - x \rangle
			- \mD(x^{k,i}, x)
		}\\
		&+ \frac{T n \eta_x^k}{2} (3M + \sigma)^2.
	\end{align*}
	
	Then, using $x^{k, 0} = x^k$ and convexity of $\mD$ we get
	\begin{align*}
		&\frac{1}{2\eta_x^k T} \E{\sqn{x^{k,T} - x}} \leq \frac{1}{2\eta_x^k T} \E{\sqn{x^k - x}} + \frac{\tau_x^k}{2} \sqn{x^k - x} + \frac{n \eta_x^k}{2} (3M + \sigma)^2\\
		&+ \E{
			-\frac{\tau_x^k}{2} \sqn{\tilde{x}^{k+1} - x^k}
			-\frac{\tau_x^k + r_x}{2} \sqn{\tilde{x}^{k+1} - x}
			+ \langle y^{k+1}, \tilde{x}^{k+1} - x \rangle
			- \mD(\tilde{x}^{k+1}, x)
		}.
	\end{align*}
	
	Using the definition of $\eta_x^k$ and $x^{k+1}$ we get
	\begin{align*}
		&(\tau_x^k + \frac{1}{2} r_x) \E{\sqn{x^{k+1} - x}}\\
		& \leq \tau_x^k \E{\sqn{x^k - x}} + \frac{n (3M + \sigma)^2}{2 \tau_x^k T} - \E{\mD(\tilde{x}^{k+1}, x)} + \E{\langle y^{k+1}, \tilde{x}^{k+1} - x \rangle}\\ &- \frac{\tau_x^k}{2} \E{\sqn{\tilde{x}^{k+1} - x^k}},
	\end{align*}
	which concludes the proof.
	
\end{proof}

\begin{lemma}\label{lemm:gap_iteration}
	Let $r > 0$. For any $x, y \in \cH$ and $z \in \cL^\perp$ the following holds:
	\begin{align}
		\E{\mQ(x, y, z, x_a^K, y_a^K, z_a^K)} &\leq \frac{1}{K^2} \left(2r \sqn{x} + \frac{36}{r} \sqn{y} + \frac{90 \chi^2}{r} \sqn{z} \right) + \frac{18n(3M+\sigma)^2}{rKT}.
	\end{align}
\end{lemma}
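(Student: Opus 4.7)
The strategy is to establish one-step descent inequalities for each of the three blocks $x$, $y$, $z$, combine them into a bound on $\mQ$ per iteration, multiply by the Nesterov-style weights $\alpha_k^{-2}$, sum over $k = 0, \ldots, K-1$, and pass to the weighted averages via Jensen's inequality. Lemma~\ref{lemm:iteration} already supplies the $x$-block descent, producing both the $\mD(\tilde x^{k+1}, x)$ term and the coupling $\langle y^{k+1}, \tilde x^{k+1} - x\rangle$ that appear in $\mQ$. For the $y$- and $z$-blocks I will carry out a Nesterov-style accelerated analysis on the smooth convex quadratic $G(y,z) = (r_{yz}/2)\sqn{y+z}$, exploiting that lines~\ref{line:comb}--\ref{line:ext} implement accelerated forward steps on $G$ with extrapolated iterates $\uln{y}^k, \uln{z}^k$, the primal driver $\hat x^{k+1} = x^k + \gamma_k(\tilde x^k - x^{k-1})$, and the error-feedback variable $m^k$. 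The momentum $\gamma_k = (k+2)/(k+3)$ together with the averaging $\alpha_k = 3/(k+3)$ is tuned precisely so that the cross-term $\alpha_k^{-1}\langle y^{k+1}, \tilde x^{k+1} - x\rangle$ produced by the $y$-descent cancels its counterpart in the $x$-descent after weighting, which is what allows the accelerated rate $1/K^2$ to propagate through a non-smooth $\mT$-block.

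The second step is the telescoping. The weights $\lambda_k$ in~\eqref{eq:lambda} are exactly the residuals $\alpha_{k-1}^{-2} - \alpha_k^{-2} + \alpha_k^{-1}$, so multiplying each of the three block inequalities by $\alpha_k^{-2}$ and summing over $k=0,\ldots,K-1$ collapses the primal-dual terms into $\bigl(\textstyle\sum_{k=1}^{K}\lambda_k\bigr)\E{\mQ(x,y,z,x_a^K,y_a^K,z_a^K)}$ after applying Jensen's inequality, valid because $\mQ$ is convex in $(x_o,y_o,z_o)$ and concave in $(x,y,z)$ by Lemma~\ref{lemm:optimum}, together with the definition of $(x_a^K,y_a^K,z_a^K)$ as the $\lambda_k$-weighted average of $(\ol{x}^k,\ol{y}^k,\ol{z}^k)$. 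The distance terms $(\tau_x^k/\alpha_k^2)\sqn{x^k-x}$, $(2\alpha_k^2\eta_y^k)^{-1}\sqn{y^k-y}$, $(2\alpha_k^2\eta_z^k)^{-1}\sqn{z^k-z}$ telescope to their initial values, which equal $\sqn{x}$, $\sqn{y}$, $\sqn{z}$ by the initialization~\eqref{eq:input}. Substituting the parameter choices~\eqref{eq:rx}--\eqref{eq:beta_sigma_theta} and dividing by $\sum_k \lambda_k = \Theta(K^2)$ produces exactly the coefficients $2r$, $36/r$, $90\chi^2/r$ of the claim, with the $\chi^2$ factor originating from $\eta_z = 1/(10 r_{yz}\chi^2)$. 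The stochastic inner-oracle contribution from Lemma~\ref{lemm:iteration}, weighted by $\alpha_k^{-2}$, summed, and divided by $\sum\lambda_k$, gives the trailing $18n(3M+\sigma)^2/(rKT)$ term.

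\textbf{Main obstacle.} The delicate part is the $z$-block analysis under time-varying gossip. The algorithm replaces the ideal projection $\mP g_z^k = \nabla_z G(\uln{y}^k,\uln{z}^k)$ (which the reformulation~\eqref{eq:prob_mon} would demand) by the local gossip $(\mW(\tau)\otimes \mI_d)(g_z^k + m^k)$, and the bias $(\mP - \mW(\tau))g_z^k$ is accumulated into the error-feedback variable $m^{k+1} = (\eta_z^k/\eta_z^{k+1})(m^k + g_z^k - \hat g_z^k)$ from line~\ref{line:ext}. To close the descent I will introduce an auxiliary Lyapunov term proportional to $\eta_z^k\sqn{m^k}$, use the contraction $\sqn{\mW(\tau)v - v} \leq (1 - 1/\chi)\sqn{v}$ on $\cL^\perp$ from Assumption~\ref{ass:chi} to bound its growth, and absorb its residual into the $z$-distance term. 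The step size $\eta_z = 1/(10 r_{yz}\chi^2)$ from~\eqref{eq:tau_eta} is tuned precisely so that this absorption is feasible, and it is the source of the $\chi^2$ prefactor on $\sqn{z}$. Stochastic noise enters only through the $x$-block and is already absorbed inside Lemma~\ref{lemm:iteration}, so the $y$- and $z$-analyses can be carried out pathwise, conditioning on the filtration $\cF_{k,t}$.
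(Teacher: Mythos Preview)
Your proposal is correct and follows essentially the same route as the paper: the paper combines the $x$-block descent of Lemma~\ref{lemm:iteration} with a $y,z$-block descent that it imports verbatim as Lemma~11 of \cite{kovalev2024lower} (containing precisely the error-feedback Lyapunov term $\tfrac{1}{2\eta_z}\sqn{\eta_z^k m^k}_{\mP}$ you describe), telescopes via the weights $\lambda_k$, applies convexity of $\mQ$ to pass to the $\lambda$-average, and then substitutes the parameter choices~\eqref{eq:rx}--\eqref{eq:input}. Your plan re-derives that cited lemma in place rather than quoting it, but the ingredients and their assembly are identical.
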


\begin{proof}
	From Lemma 11 of the paper \citep{kovalev2024lower}, we get
	\begin{align*}
		&\frac{1}{2\eta_y}\sqn{y^{k+1} - y}
		+\frac{1}{2\eta_z}\sqn{\hat{z}^{k+1} - z}
		+\frac{1}{2\eta_z}\sqn{\eta_z^{k+1} m^{k+1}}_{\mP}
		\\&\leq
		\frac{1}{2\eta_y}\sqn{y^{k} - y}
		+\frac{1}{2\eta_z}\sqn{\hat{z}^{k} - z}
		+\frac{1}{2\eta_z}\sqn{\eta_z^k m^k}_{\mP}
		+2\eta_y\alpha_k^{-2}\gamma_k^2\sqn{x^{k-1} - \tilde{x}^k}
		\\&
		+\gamma_k\alpha_k^{-1} \langle x^{k-1} - \tilde{x}^k, y^k - y \rangle
		-\alpha_k^{-1}\langle x^k - \tilde{x}^{k+1}, y^{k+1} - y\rangle
		-\alpha_k^{-1}\langle \tilde{x}^{k+1}, y^{k+1} - y\rangle
		\\&
		+(\alpha_k^{-2}-\alpha_k^{-1})\left(G(\ol{y}^k,\ol{z}^k) - G(y,z)\right)
		-\alpha_k^{-2}\left(
		G(\ol{y}^{k+1},\ol{z}^{k+1})-G(y,z)
		\right).
	\end{align*}
	
	From Lemma~\ref{lemm:iteration} we get
	\begin{align*}
		&(\tau_x^k + \frac{1}{2} r_x) \E{\sqn{x^{k+1} - x}}\\
		& \leq \tau_x^k \E{\sqn{x^k - x}} + \frac{n (3M + \sigma)^2}{2 \tau_x^k T} - \E{\mD(\tilde{x}^{k+1}, x)}\\
		&+ \E{\langle y^{k+1}, \tilde{x}^{k+1} - x \rangle} - \frac{\tau_x^k}{2} \sqn{\tilde{x}^{k+1} - x^k}.
	\end{align*}
	
	Dividing this inequality by $\alpha_k$ and conditioning on \[\cF_k = \sigma(x^k, \ldots, x^0, y^k, \ldots, y^0, z^k, \ldots, z^0),\] we get
	\begin{align*}
		& \tau_x (\alpha_k^{-2} + \alpha_k^{-1}) \Ec{\sqn{x^{k+1} - x}}{\cF_k} \leq \tau_x \alpha_k^{-2} \sqn{x^k - x}\\
		& + \frac{n (3M + \sigma)^2}{2 \tau_x T} - \alpha_k^{-1}\Ec{\mD(\tilde{x}^{k+1}, x)}{\cF_k}\\
		&+ \alpha_k^{-1} \Ec{\langle y^{k+1}, \tilde{x}^{k+1} - x \rangle}{\cF_k} - \frac{\tau_x \alpha_k^{-2}}{2} \Ec{\sqn{\tilde{x}^{k+1} - x^k}}{\cF_k}.
	\end{align*}

	Combining with the previous inequality conditioned on $\cF_k$ we get
	\begin{align*}
		&\tau_x (\alpha_k^{-2} + \alpha_k^{-1}) \Ec{\sqn{x^{k+1} - x}}{\cF_k}
		+ \frac{1}{2\eta_y}\sqn{y^{k+1} - y}\\
		&+\frac{1}{2\eta_z}\sqn{\hat{z}^{k+1} - z}
		+\frac{1}{2\eta_z}\sqn{\eta_z^{k+1} m^{k+1}}_{\mP}
		\\&\leq
		\tau_x \alpha_k^{-2} \sqn{x^k - x}
		+\frac{1}{2\eta_y}\sqn{y^{k} - y}\\
		&+\frac{1}{2\eta_z}\sqn{\hat{z}^{k} - z}
		+\frac{1}{2\eta_z}\sqn{\eta_z^k m^k}_{\mP}
		+\frac{n (3M + \sigma)^2}{2 \tau_x T}
		\\&
		-\frac{\tau_x \alpha_k^{-2}}{2} \Ec{\sqn{\tilde{x}^{k+1} - x^k}}{\cF_k}
		+2\eta_y\alpha_k^{-2}\gamma_k^2\sqn{x^{k-1} - \tilde{x}^k}
		\\&
		+\gamma_k\alpha_k^{-1} \langle x^{k-1} - \tilde{x}^k, y^k - y \rangle
		-\alpha_k^{-1} \Ec{\langle x^k - \tilde{x}^{k+1}, y^{k+1} - y\rangle}{\cF_k}
		\\&
		- \alpha_k^{-1}\left(\Ec{\mD(\tilde{x}^{k+1}, x) + \langle y^{k+1}, x \rangle - \langle \tilde{x}^{k+1}, y \rangle \right)}{\cF_k}
		\\&
		+(\alpha_k^{-2}-\alpha_k^{-1})\left(G(\ol{y}^k,\ol{z}^k) - G(y,z)\right)
		-\alpha_k^{-2}\left(
		G(\ol{y}^{k+1},\ol{z}^{k+1})-G(y,z)
		\right).
	\end{align*}
	
	Using the definition of $\mQ$ we get
	\begin{align*}
		&\tau_x (\alpha_k^{-2} + \alpha_k^{-1}) \Ec{\sqn{x^{k+1} - x}}{\cF_k}
		+ \frac{1}{2\eta_y}\sqn{y^{k+1} - y}\\
		&+\frac{1}{2\eta_z}\sqn{\hat{z}^{k+1} - z}
		+\frac{1}{2\eta_z}\sqn{\eta_z^{k+1} m^{k+1}}_{\mP}
		\\&\leq
		\tau_x \alpha_k^{-2} \sqn{x^k - x}
		+\frac{1}{2\eta_y}\sqn{y^{k} - y}
		+\frac{1}{2\eta_z}\sqn{\hat{z}^{k} - z}\\
		&+\frac{1}{2\eta_z}\sqn{\eta_z^k m^k}_{\mP}
		+\frac{n (3M + \sigma)^2}{2 \tau_x T}
		\\&
		-\frac{\tau_x \alpha_k^{-2}}{2} \Ec{\sqn{\tilde{x}^{k+1} - x^k}}{\cF_k}
		+2\eta_y\alpha_k^{-2}\gamma_k^2\sqn{x^{k-1} - \tilde{x}^k}
		\\&
		+\gamma_k\alpha_k^{-1} \langle x^{k-1} - \tilde{x}^k, y^k - y \rangle
		-\alpha_k^{-1} \Ec{\langle x^k - \tilde{x}^{k+1}, y^{k+1} - y\rangle}{\cF_k}
		\\&
		+(\alpha_k^{-2} - \alpha_k^{-1}) \mQ(x, y, z, \bar{x}^k, \bar{y}^k, \bar{z}^k)
		-\alpha_k^{-2} \Ec{\mQ(x, y, z, \bar{x}^{k+1}, \bar{y}^{k+1}, \bar{z}^{k+1})}{\cF_k}.
	\end{align*}
	
	Take $\alpha_k$ as in Equation~\ref{eq:alpha_gamma}. Then, \[\alpha_k^{-2} + \alpha_k^{-1} \geq \alpha_{k+1}^{-2},\quad \gamma_k \alpha_k^{-1} = \frac{k+2}{3},\quad \alpha_k^{-1} = \frac{k+3}{3}.\]
	
	Hence, we get
	\begin{align*}
		&\tau_x \alpha_{k+1}^{-2} \Ec{\sqn{x^{k+1} - x}}{\cF_k}
		+ \frac{1}{2\eta_y}\sqn{y^{k+1} - y}
		+\frac{1}{2\eta_z}\sqn{\hat{z}^{k+1} - z}
		+\frac{1}{2\eta_z}\sqn{\eta_z^{k+1} m^{k+1}}_{\mP}
		\\
		&\leq
		\tau_x \alpha_k^{-2} \sqn{x^k - x}
		+\frac{1}{2\eta_y}\sqn{y^{k} - y}
		+\frac{1}{2\eta_z}\sqn{\hat{z}^{k} - z}\\
		&+\frac{1}{2\eta_z}\sqn{\eta_z^k m^k}_{\mP}
		+\frac{n (3M + \sigma)^2}{2 \tau_x T}
		\\&
		-\frac{\tau_x (k+3)^2}{18} \Ec{\sqn{\tilde{x}^{k+1} - x^k}}{\cF_k}
		+2\eta_y \frac{(k+2)^2}{9} \sqn{x^{k-1} - \tilde{x}^k}
		\\&
		+\frac{k+2}{3} \langle x^{k-1} - \tilde{x}^k, y^k - y \rangle
		-\frac{k+3}{3} \Ec{\langle x^k - \tilde{x}^{k+1}, y^{k+1} - y\rangle}{\cF_k}
		\\&
		+(\alpha_k^{-2} - \alpha_k^{-1}) \mQ(x, y, z, \bar{x}^k, \bar{y}^k, \bar{z}^k)
		-\alpha_k^{-2} \Ec{\mQ(x, y, z, \bar{x}^{k+1}, \bar{y}^{k+1}, \bar{z}^{k+1})}{\cF_k}\\
		&\aeq{uses the definition of $\eta_y$ and $\tau_x$}
		\tau_x \alpha_k^{-2} \sqn{x^k - x}
		+\frac{1}{2\eta_y}\sqn{y^{k} - y}
		+\frac{1}{2\eta_z}\sqn{\hat{z}^{k} - z}\\
		&+\frac{1}{2\eta_z}\sqn{\eta_z^k m^k}_{\mP}
		+\frac{n (3M + \sigma)^2}{2 \tau_x T}
		\\&
		-2\eta_y \frac{(k+3)^2}{9} \Ec{\sqn{\tilde{x}^{k+1} - x^k}}{\cF_k}
		+2\eta_y \frac{(k+2)^2}{9} \sqn{x^{k-1} - \tilde{x}^k}
		\\&
		+\frac{k+2}{3} \langle x^{k-1} - \tilde{x}^k, y^k - y \rangle
		-\frac{k+3}{3} \Ec{\langle x^k - \tilde{x}^{k+1}, y^{k+1} - y\rangle}{\cF_k}
		\\&
		+(\alpha_k^{-2} - \alpha_k^{-1}) \mQ(x, y, z, \bar{x}^k, \bar{y}^k, \bar{z}^k)
		-\alpha_k^{-2} \Ec{\mQ(x, y, z, \bar{x}^{k+1}, \bar{y}^{k+1}, \bar{z}^{k+1})}{\cF_k},
	\end{align*}
	where \annotate.

	Summing previously obtained inequalities for $k=0, \ldots, K-1$ and using $\E{\Ec{Z}{\cdot}} = \E{Z}$ we get
	\begin{align*}
		&\tau_x\alpha_{K}^{-2}\E{\sqn{x^{K} - x}}
		+\frac{1}{2\eta_y}\E{\sqn{y^{K} - y}}
		+\frac{1}{2\eta_z}\E{\sqn{\hat{z}^{K} - z}}
		+\frac{1}{2\eta_z}\E{\sqn{\eta_z^{K} m^{K}}_{\mP}}
		\\
		&\aleq{uses the definition of $\lambda_k$}
		\tau_x \alpha_0^{-2} \sqn{x^0 - x}
		+\frac{1}{2\eta_y}\sqn{y^0 - y}
		+\frac{1}{2\eta_z}\sqn{\hat{z}^0 - z}
		+\frac{1}{2\eta_z}\sqn{\eta_z^0 m^0}_{\mP}
		+\frac{n K (3M + \sigma)^2}{2 \tau_x T}\\
		&+\frac{8 \eta_y}{9} \sqn{\tilde{x}^0 - x^{-1}}
		+ \frac{2}{3} \langle x^{-1} - \tilde{x}^{0}, y^{0} - y\rangle\\
		&-2 \eta_y \frac{(K+2)^2}{9} \E{\sqn{\tilde{x}^K - x^{K-1}}}
		- \frac{K+2}{3} \E{\langle x^{K-1} - \tilde{x}^{K}, y^{K} - y\rangle}
		-\E{\sum_{k=1}^{K} \lambda_k \mQ(x, y, z, \bar{x}^k, \bar{y}^k, \bar{z}^k)}\\
		&\aeq{uses $\tilde{x}^0 = x^{-1}$}
		\tau_x \alpha_0^{-2} \sqn{x^0 - x}
		+\frac{1}{2\eta_y}\sqn{y^0 - y}
		+\frac{1}{2\eta_z}\sqn{\hat{z}^0 - z}
		+\frac{1}{2\eta_z}\sqn{\eta_z^0 m^0}_{\mP}
		+\frac{n K (3M + \sigma)^2}{2 \tau_x T}\\
		&-2 \eta_y \frac{(K+2)^2}{9} \E{\sqn{\tilde{x}^K - x^{K-1}}}
		- \frac{K+2}{3} \E{\langle x^{K-1} - \tilde{x}^{K}, y^{K} - y\rangle}
		-\E{\sum_{k=1}^{K} \lambda_k \mQ(x, y, z, \bar{x}^k, \bar{y}^k, \bar{z}^k)}\\
		&\aleq{uses the Cauchy-Schwarz inequality}
		\tau_x \alpha_0^{-2} \sqn{x^0 - x}
		+\frac{1}{2\eta_y}\sqn{y^0 - y}
		+\frac{1}{2\eta_z}\sqn{\hat{z}^0 - z}
		+\frac{1}{2\eta_z}\sqn{\eta_z^0 m^0}_{\mP}
		+\frac{n K (3M + \sigma)^2}{2 \tau_x T}\\
		&-2 \eta_y \frac{(K+2)^2}{9} \E{\sqn{\tilde{x}^K - x^{K-1}}}
		+\eta_y \frac{(K+2)^2}{9} \E{\sqn{\tilde{x}^K - x^{K-1}}}
		+\frac{1}{4 \eta_y} \E{\sqn{y^{K} - y}}\\
		&-\E{\sum_{k=1}^{K} \lambda_k \mQ(x, y, z, \bar{x}^k, \bar{y}^k, \bar{z}^k)}\\
		&=
		\tau_x \alpha_0^{-2} \sqn{x^0 - x}
		+\frac{1}{2\eta_y}\sqn{y^0 - y}
		+\frac{1}{2\eta_z}\sqn{\hat{z}^0 - z}
		+\frac{1}{2\eta_z}\sqn{\eta_z^0 m^0}_{\mP}
		+\frac{n K (3M + \sigma)^2}{2 \tau_x T}\\
		&-\eta_y \frac{(K+2)^2}{9} \E{\sqn{\tilde{x}^K - x^{K-1}}}
		+\frac{1}{4 \eta_y} \E{\sqn{y^{K} - y}}\\
		&-\E{\sum_{k=1}^{K} \lambda_k \mQ(x, y, z, \bar{x}^k, \bar{y}^k, \bar{z}^k)}\\
		&\aleq{uses~\ref{lemm:optimum}}
		\tau_x \alpha_0^{-2} \sqn{x^0 - x}
		+\frac{1}{2\eta_y}\sqn{y^0 - y}
		+\frac{1}{2\eta_z}\sqn{\hat{z}^0 - z}
		+\frac{1}{2\eta_z}\sqn{\eta_z^0 m^0}_{\mP}
		+\frac{n K (3M + \sigma)^2}{2 \tau_x T}\\
		&-\eta_y \frac{(K+2)^2}{9} \E{\sqn{\tilde{x}^K - x^{K-1}}}
		+\frac{1}{4 \eta_y} \E{\sqn{y^{K} - y}}\\
		&-\E{\sum_{k=1}^{K} \lambda_k \mQ(x, y, z, x_a^K, y_a^K, z_a^K)}\\
		&\aeq{uses the definitions of $\lambda_k$ and $\alpha_k$ and $\alpha_0 = 1$}
		\tau_x \sqn{x^0 - x}
		+\frac{1}{2\eta_y}\sqn{y^0 - y}
		+\frac{1}{2\eta_z}\sqn{\hat{z}^0 - z}
		+\frac{1}{2\eta_z}\sqn{\eta_z^0 m^0}_{\mP}
		+\frac{n K (3M + \sigma)^2}{2 \tau_x T}\\
		&-\eta_y \frac{(K+2)^2}{9} \E{\sqn{\tilde{x}^K - x^{K-1}}}
		+\frac{1}{4 \eta_y} \E{\sqn{y^{K} - y}}\\
		&-\E{\sum_{k=0}^{K-1} \alpha_k^{-1} \mQ(x, y, z, x_a^K, y_a^K, z_a^K)},
	\end{align*}
	where \annotate.

	Using the linearity of the expectation we get
	\begin{align*}
		&\E{\left( \sum_{k=0}^{K-1}\alpha_k^{-1} \right) \mQ(x, y, z, x_a^K, y_a^K, z_a^K)} \leq \frac{r}{3} \sqn{x} + \frac{6}{r} \sqn{y} + \frac{15 \chi^2}{r} \sqn{z} + \frac{3n(3M+\sigma)^2}{rKT}.
	\end{align*}

	Next, using the estimation \[\sum_{k=0}^{K-1} \alpha_k^{-1} \geq \frac{K^2}{6}\] and the fact that $x^0 = 0, y^0 = 0, z^0 = 0, m^0 = 0$ we obtain
	\begin{align*}
		&\E{\mQ(x, y, z, x_a^K, y_a^K, z_a^K)} \\&\leq \frac{1}{K^2} \left( 2r \sqn{x} + \frac{36}{r} \sqn{y} + \frac{90 \chi^2}{r} \sqn{z} \right) + \frac{18n(3M+\sigma)^2}{rKT},
	\end{align*}
	which concludes the proof.
\end{proof}

\section{Proof of Theorem~\ref{th:st_mntn}}\label{sec:proof_st_mntn}
This theorem is proved for the saddle point problems, as it directly implies the same convergence rate for the convex minimization problems.

We start by estimating the gap function defined in Definition~\ref{def:gap_spp}:
\begin{align*}
	&n \E{ p(\xi_o^K, \zeta^*) - p(\xi^*,\zeta_o^K)}\\
	&= \E{\sum_{i=1}^{n} f_i(\xi_o^K, \zeta^*) - f_i(\xi^*,\zeta_o^K)} +\frac{nr}{2}\E{ \sqn{\xi_o^K} - \sqn{\zeta^*} - \sqn{\xi^*} + \sqn{\zeta_o^K}}\\
	&=\E{\sum_{i=1}^{n} f_i(\xi_o^K, \zeta^*) - f_i(\xi^*,\zeta_o^K)} +\frac{nr}{2}\E{\sqn{x_o^K} - \sqn{x^*}}\\
	&\aleq{uses the convexity of squared norm}\E{\sum_{i=1}^{n} f_i(\xi_o^K, \zeta^*) - f_i(\xi^*,\zeta_o^K)} +\frac{r}{2}\E{\sqn{x_a^K} - \sqn{w^*}}\\
	&\aleq{uses the Assumption~\ref{ass:lipschitz}}\E{\sum_{i=1}^{n} f_i(\xi_{a, i}^K, \zeta^*) - f_i(\xi^*,\zeta_{a, i}^K) + \sqrt{2}M \norm{x_{a,i}^K - x_o^K}} +\frac{r}{2}\E{\sqn{x_a^K} - \sqn{w^*}}\\
	&\aleq{uses the definition of $F$}\E{F(\xi_a^K, \zeta^*) - F(\xi^*, \zeta_a^K)} +\frac{1}{2r_{yz}}\E{\sqn{x_a^K}} - \frac{1}{2r_{yz}}\E{\sqn{w^*}}\\&+ \E{\sum_{i=1}^{n}\sqrt{2}M \norm{x_{a,i}^K - x_o^K}}\\
	&\aeq{uses the definition of $\mD$}\E{\mD(x_a^K, x)} +\frac{1}{2r_{yz}}\E{\sqn{x_a^K}} - \frac{1}{2r_{yz}}\E{\sqn{w^*}} + \E{\sum_{i=1}^{n}\sqrt{2}M \norm{x_{a,i}^K - x_o^K}}\\
	&\aleq{uses the Cauchy-Schwarz inequality}\E{\mD(x_a^K, x)} +\frac{1}{2r_{yz}}\E{\sqn{x_a^K}} - \frac{1}{2r_{yz}}\E{\sqn{w^*}}\\&+ \E{\sqrt{\sum_{i=1}^n 2M^2}\sqrt{\sum_{i=1}^n \sqn{x_{a,i}^K - x_o^K}}}\\
	&\aeq{uses the definition of $\mP$}\E{\mD(x_a^K, x)} +\frac{1}{2r_{yz}}\E{\sqn{x_a^K}} - \frac{1}{2r_{yz}}\E{\sqn{w^*}} + \E{\sqrt{2n}M\norm{x_a^K}_{\mP}}\\
	&=\E{\mD(x_a^K, x)} +\frac{1}{2r_{yz}}\E{\sqn{x_a^K}} - \frac{1}{2r_{yz}}\E{\sqn{w^*}} + \sqrt{2n}M\E{\norm{x_a^K}_{\mP}},
\end{align*}
where \annotate.

Next, we take $y = -r_{yz}^{-1} x_a^K - z$. We also take $y^*$ and $z^*$ as in Lemma~\ref{lemm:optimum}. Then,
\begin{align*}
	&G(y, z) = \frac{r_{yz}}{2} \sqn{y+z} = \frac{1}{2 r_{yz}} \sqn{x_a^K};\\
	&G(y^*, z^*) = \frac{r_{yz}}{2} \sqn{y^* + z^*} \aeq{uses the proof of Lemma~\ref{lemm:optimum}} \frac{r_{yz}}{2} \sqn{(r_x - r) w^*} = \frac{1}{2r_{yz}} \sqn{w^*};\\
	&\langle y, x_a^K \rangle = - \frac{1}{r_{yz}} \sqn{x_a^K} - \langle z, x_a^K \rangle\\
	&\langle y^*, w^* \rangle = \sum_{i=1}^n \langle y^*_i, x^* \rangle \aeq{uses the definition of $y^*$} \sum_{i=1}^n (\langle \Delta_i^{\xi, *} + r_x \xi^*, \xi^* \rangle + \langle -\Delta_i^{\zeta, *} + r_x \zeta^*, \zeta^* \rangle )\\
	&=\sum_{i=1}^n (\langle \Delta_i^{\xi, *}, \xi^* \rangle - \langle \Delta_i^{\zeta, *}, \zeta^* \rangle + r_x \sqn{x^*} )\aeq{uses the definition of $x^*$ and~\eqref{eq:xstar}} \sum_{i=1}^n ( -r \sqn{x^*} + r_x \sqn{x^*} ) = -\frac{1}{r_{yz}} \sqn{w^*},
\end{align*}
where \annotate.

Hence,
\begin{align*}
	&\mQ(w^*, y, z, x_a^K, y^*, z^*) = \mD(x_a^K, w^*) - \langle y, x_a^K \rangle + \langle y^*, w^* \rangle - G(y, z) + G(y^*, z^*)\\
	&=\mD(x_a^K, w^*) + \frac{1}{r_{yz}} \sqn{x_a^K} + \langle z, x_a^K \rangle - \frac{1}{r_{yz}} \sqn{w^*} - \frac{1}{2 r_{yz}} \sqn{x_a^K} + \frac{1}{2r_{yz}} \sqn{w^*}\\
	&=\mD(x_a^K, w^*) + \frac{1}{2r_{yz}} \sqn{x_a^K} - \frac{1}{2r_{yz}} \sqn{w^*} + \langle z, x_a^K \rangle.
\end{align*}

Combining this with previously obtained inequality we get
\begin{align*}
	&n \E{p(\xi_o^K, \zeta^*) - p(\xi^*,\zeta_o^K)}\\
	&\leq \E{\mQ(w^*, y, z, x_a^K, y^*, z^*)} - \E{\langle z, x_a^K \rangle} + \E{\sqrt{2n}M\norm{x_a^K}_{\mP}}\\
	&\aleq{uses the convexity of $\mQ$ in $y_o$ and $z_o$ and Lemma~\ref{lemm:optimum}}\E{\mQ(w^*, y, z, x_a^K, y_a^K, z_a^K)} - \E{\langle z, x_a^K \rangle} + \E{\sqrt{2n}M\norm{x_a^K}_{\mP}}.
\end{align*}
where \annotate.

Now, we choose $z \in \mathcal{L}^\perp$ as follows:
\begin{equation}
	z = \begin{cases}
		\sqrt{2n}M\norm{\mP x_a^K}^{-1}\mP x_a^K & \text{if } x_a^K \neq 0\\
		0 & \text{if } x_a^K = 0
	\end{cases}.
\end{equation}

Hence,
\begin{align*}
	&n \E{p(\xi_o^K, \zeta^*) - p(\xi^*,\zeta_o^K)} \leq\E{\mQ(w^*, y, z, x_a^K, y_a^K, z_a^K)}\\
	&\aleq{uses Lemma~\ref{lemm:gap_iteration}}\frac{1}{K^2} \left( 2r \sqn{w^*} + \frac{36}{r} \sqn{y} + \frac{90 \chi^2}{r} \sqn{z} \right) + \frac{18n(3M+\sigma)^2}{rKT}\\
	&\aeq{uses the definition of $y$}\frac{1}{K^2} \left( 2r \sqn{w^*} + \frac{36}{r} \sqn{r_{yz}^{-1} x_a^K + z} + \frac{90 \chi^2}{r} \sqn{z} \right) + \frac{18n(3M+\sigma)^2}{rKT}\\
	&=\frac{1}{K^2} \left( 2r \sqn{w^*} + \frac{36}{r} \sqn{r_{yz}^{-1} (x_a^K - w^* + w^*) + z} + \frac{90 \chi^2}{r} \sqn{z} \right) + \frac{18n(3M+\sigma)^2}{rKT}\\
	&\aleq{uses the parallelogram rule}\frac{1}{K^2} \left( 2r \sqn{w^*} + \frac{108}{r r_{yz}^2} \sqn{x_a^K - w^*} + \frac{108}{r r_{yz}^2} \sqn{w^*} + \frac{108}{r} \sqn{z} + \frac{90 \chi^2}{r} \sqn{z} \right)\\&+ \frac{18n(3M+\sigma)^2}{rKT}\\
	&\leq\frac{1}{K^2} \left( 2r \sqn{w^*} + \frac{108}{r r_{yz}^2} \sqn{x_a^K - w^*} + \frac{108}{r r_{yz}^2} \sqn{w^*} + \frac{198 \chi^2}{r} \sqn{z} \right)\\&+ \frac{18n(3M+\sigma)^2}{rKT}\\
	&\aleq{uses the definition of $z$}\frac{1}{K^2} \left( 2r \sqn{w^*} + \frac{108}{r r_{yz}^2} \sqn{x_a^K - w^*} + \frac{108}{r r_{yz}^2} \sqn{w^*} + \frac{198 n \chi^2 M^2}{r} \right)\\&+ \frac{18n(3M+\sigma)^2}{rKT}\\
	&\aleq{uses Lemma~\ref{lemm:optimum}}\frac{1}{K^2} \left( \frac{4nM^2}{r}  + \frac{216nM^2}{r^3 r_{yz}^2} + \frac{108}{r r_{yz}^2} \sqn{x_a^K - w^*} + \frac{198 n \chi^2 M^2}{r} \right)\\&+ \frac{18n(3M+\sigma)^2}{rKT}\\
	&\aeq{uses the definition of $r_{yz}$}\frac{1}{K^2} \left( \frac{28nM^2}{r} + \frac{108}{r r_{yz}^2} \sqn{x_a^K - w^*} + \frac{198 n \chi^2 M^2}{r} \right)\\&+ \frac{18n(3M+\sigma)^2}{rKT}\\
	&\leq \frac{226 n \chi^2 M^2}{r K^2} + \frac{18n(3M+\sigma)^2}{rKT} + \frac{12r}{K^2} \sqn{x_a^K - w^*},
\end{align*}
where \annotate.

To estimate $r\sqn{x_a^K - w^*}$ we have
\begin{align*}
	&\frac{r_x}{2} \sqn{x_a^K - w^*} \aleq{uses strong convexity of $\mQ$ in $x_o$ and Lemma~\ref{lemm:optimum}} \mQ(w^*, y^*, z^*, x_a^K, y^*, z^*)\\
	&\aleq{uses Lemma~\ref{lemm:optimum}}\mQ(w^*, y^*, z^*, x_a^K, y_a^K, z_a^K)\\
	&\aleq{uses Lemma~\ref{lemm:gap_iteration}}\frac{1}{K^2} \left( 2r \sqn{x^*} + \frac{36}{r} \sqn{y^*} + \frac{90 \chi^2}{r} \sqn{z^*} \right) + \frac{18n(3M+\sigma)^2}{rKT}\\
	&\aleq{uses Lemma~\ref{lemm:optimum}}\frac{1}{K^2} \left( \frac{4nM^2}{r} + \frac{72 (1 + r_x/r)^2 n M^2}{r} + \frac{720 n \chi^2 M^2}{r} \right) + \frac{18n(3M+\sigma)^2}{rKT},
\end{align*}
where \annotate.

Using the definition of $r_x$, we get
\begin{align*}
	&r \sqn{x_a^K - w^*} \leq \frac{3}{K^2} \left( \frac{204nM^2}{r} + \frac{720 n \chi^2 M^2}{r} \right) + \frac{18n(3M+\sigma)^2}{rKT}\\
	&\leq \frac{2772 n \chi^2 M^2}{rK^2} + \frac{18n(3M+\sigma)^2}{rKT}.
\end{align*}

Combining and dividing by $n$, we obtain\begin{align*}
	&\E{p(\xi_o^K, \zeta^*) - p(\xi^*,\zeta_o^K)}\\
	&\leq \frac{226 n \chi^2 M^2}{r K^2} + \frac{18n(3M+\sigma)^2}{rKT} + \frac{12}{K^2} \left(\frac{2772 n \chi^2 M^2}{rK^2} + \frac{18n(3M+\sigma)^2}{rKT} \right).\end{align*}

Now, taking \begin{equation*}
	K \geq \mathcal{O}\left(\frac{\chi M}{\sqrt{r \varepsilon}}\right) \quad \text{and} \quad KT \geq \mathcal{O}\left(\frac{(M + \sigma)^2}{r \varepsilon}\right)
\end{equation*}
we achieve $G_{\mathrm{SPP}}(x_o^K) \leq \varepsilon,$ which concludes the proof for the saddle point problems.

To see that the obtained upper bound also holds for convex problems, observe that any convex minimization problem can be cast as a special case of a saddle-point problem. Specifically, consider a convex optimization problem of the form
\begin{equation}
	\min_{x \in \R^d} p(x) \label{eq:special_case_cvx}.
\end{equation}

This problem can be equivalently rewritten as the saddle-point problem
\begin{equation}
	\min_{x \in \R^d} \max_{y \in \mathbb{R}} \left\{ p(x) + \langle y, 0 \rangle \right\} \label{eq:special_case}.
\end{equation}
Therefore, ensuring $G_{\mathrm{SPP}}(x_o^K) \leq \varepsilon$ for the problem~\eqref{eq:special_case} implies $G_{\mathrm{CVX}}(x_o^K) \leq \varepsilon$ for the problem \eqref{eq:special_case_cvx}, which concludes the proof.

\section{Proof of Theorem~\ref{th:mntn}}\label{sec:proof_mntn}
We have the problem of the form
\begin{equation}\label{eq:non_regularized}
	\min_{\xi \in \R^{d_\xi}} \max_{\zeta \in \R^{d_\zeta}} f(\xi, \zeta) = \frac{1}{n} \sum_{i=1}^n f_i(\xi, \zeta).
\end{equation}

Let $x^* = (\xi^*, \zeta^*)$ be the solution to the problem \eqref{eq:non_regularized}, which exists due to Assumption~\ref{ass:radius} and $\norm{x^*} \leq R$. Consider the regularized version by taking
\begin{equation}\label{eq:regularized}
	p(\xi, \zeta) = f(\xi, \zeta) + \frac{r}{2}\sqn{\xi} - \frac{r}{2}\sqn{\zeta}.
\end{equation}

Let $x^*_r = (\xi^*_r, \zeta^*_r)$ be the solution to the problem \eqref{eq:regularized}, which always exists and unique.

To achieve $\E{p(\xi_o^K, \zeta^*_r) - p(\xi^*_r, \zeta_o^K)} \leq \varepsilon$ we require $\cO\left(\frac{\chi M}{\sqrt{r \varepsilon}}\right)$ decentralized communications and $\cO\left(\frac{(M + \sigma)^2}{r \varepsilon}\right)$ oracle calls.

Then, we estimate the saddle-point gap for the problem \eqref{eq:non_regularized}:
\begin{align*}
	&\E{f(\xi_o^K, \zeta^*) - f(\xi^*, \zeta_o^K)}\\
	&= \E{p(\xi_o^K, \zeta^*) - p(\xi^*, \zeta_o^K) - \frac{r}{2}\sqn{\xi_o^K} - \frac{r}{2}\sqn{\zeta_o^K}} + \frac{r}{2}\sqn{\zeta^*}+ \frac{r}{2}\sqn{\xi^*}\\
	&\aleq{uses the definition of $x_r^*$ and $x^*$}\E{p(\xi_o^K, \zeta_r^*) - p(\xi_r^*, \zeta_o^K)} + \frac{r}{2}\sqn{x^*} \aleq{uses the Assumption~\ref{ass:radius}} \varepsilon + \frac{rR^2}{2},
\end{align*}
where \annotate.

Then, taking $r = \frac{\varepsilon}{R^2}$, we achieve $\E{f(\xi_o^K, \zeta^*) - f(\xi^*, \zeta_o^K)} \leq 2\varepsilon$.

Thus, we require
\begin{equation}
	\cO\left(\frac{\chi MR}{ \varepsilon}\right) \text{ decentralized communications}
\end{equation}
and
\begin{equation}
	\cO\left(\frac{(M + \sigma)^2R^2}{ \varepsilon^2}\right) \text{ oracle calls},
\end{equation}
which concludes the proof for the saddle point problems. For the convex problems the approach is the same as in the proof of Theorem~\ref{th:st_mntn}.

\section{Proof of Corollary~\ref{cor:asym}}\label{sec:proof_asym}
We start by rescaling
\begin{align*}
	\xi \rightarrow \frac{\xi}{\sqrt{r_\xi}},\quad \zeta \rightarrow \frac{\zeta}{\sqrt{r_\zeta}}.
\end{align*}
Also, define functions
\begin{align*}
	\tilde{f}_i(\xi, \zeta) = f_i \left(\frac{\xi}{\sqrt{r_\xi}}, \frac{\zeta}{\sqrt{r_\zeta}} \right).
\end{align*}
Hence, the source function becomes
\begin{align*}
	\tilde{p}(\xi, \zeta) &= p\left(\frac{\xi}{\sqrt{r_\xi}}, \frac{\zeta}{\sqrt{r_\zeta}} \right)\\
	&= \frac{1}{n} \sum_{i=1}^n f_i \left(\frac{\xi}{\sqrt{r_\xi}}, \frac{\zeta}{\sqrt{r_\zeta}} \right) + \frac{r_\xi}{2} \sqn{\frac{\xi}{\sqrt{r_\xi}}} - \frac{r_\zeta}{2} \sqn{\frac{\zeta}{\sqrt{r_\zeta}}}\\
	&= \frac{1}{n} \sum_{i=1}^n f_i \left(\frac{\xi}{\sqrt{r_\xi}}, \frac{\zeta}{\sqrt{r_\zeta}} \right) + \frac{1}{2} \sqn{\xi} - \frac{1}{2} \sqn{\zeta}\\
	&= \frac{1}{n} \sum_{i=1}^n \tilde{f_i} \left( \xi, \zeta \right) + \frac{1}{2} \sqn{\xi} - \frac{1}{2} \sqn{\zeta}.
\end{align*}

This function has symmetric convexity and concavity constants and thus can be solved as the problem \eqref{prob:spp}. From Assumption~\ref{ass:lipschitz} we know that
\begin{align*}
	\norm{\partial f_i(\xi, \zeta)} \leq M.
\end{align*}
Hence, for the scaled problem we have
\begin{align*}
	\norm{\partial \tilde{f_i}(\xi, \zeta)} &= \norm{\begin{pmatrix}
			\partial_\xi \tilde{f_i}(\xi, \zeta)\\
			\partial_\zeta \tilde{f_i}(\xi, \zeta)	
	\end{pmatrix}} = \norm{\begin{pmatrix}
			\frac{1}{\sqrt{r_\xi}}\partial_{\frac{\xi}{\sqrt{r_\xi}}} f_i \left(\frac{\xi}{\sqrt{r_\xi}}, \frac{\zeta}{\sqrt{r_\zeta}} \right)\\
			\frac{1}{\sqrt{r_\zeta}}\partial_{\frac{\xi}{\sqrt{r_\zeta}}} f_i \left(\frac{\xi}{\sqrt{r_\xi}}, \frac{\zeta}{\sqrt{r_\zeta}} \right)	
	\end{pmatrix}}\\
	&\leq \sqrt{\sqn{\frac{1}{\sqrt{r_\xi}} M} + \sqn{\frac{1}{\sqrt{r_\zeta}} M}} = M \sqrt{\frac{1}{r_\xi} + \frac{1}{r_\zeta}}.
\end{align*}

Thus, $\tilde{M} = M \sqrt{\frac{1}{r_\xi} + \frac{1}{r_\zeta}}.$ Similarly,
\begin{align*}
	\tilde{\sigma} = \sigma \sqrt{\frac{1}{r_\xi} + \frac{1}{r_\zeta}}.
\end{align*}

From Theorem~\ref{th:st_mntn} we get that solving problem
\begin{align*}
	\min_{\xi \in \mathbb{R}^{d_\xi}}\max_{\zeta \in \mathbb{R}^{d_\zeta}} \left[ \frac{1}{n} \sum_{i=1}^n \tilde{f_i} \left( \xi, \zeta \right) + \frac{1}{2} \sqn{\xi} - \frac{1}{2} \sqn{\zeta} \right]
\end{align*}
requires
\begin{align*}
	\cO\left(\frac{\chi M}{\sqrt{r \varepsilon}}\right) \text{ decentralized communications}
\end{align*}
and
\begin{align*}
	\cO \left(\frac{(M + \sigma)^2}{r \varepsilon}\right) \text{ oracle calls}
\end{align*}
to achieve $G_{\mathrm{SPP}}(x_o^K) \leq \varepsilon$. Thus, the asymmetric problem \eqref{prob:spp_asymmetric} can be solved in
\begin{align*}
	\cO\left(\frac{\chi M}{\sqrt{\varepsilon}} \sqrt{\frac{1}{r_\xi} + \frac{1}{r_\zeta}} \right) \text{ decentralized communications}
\end{align*}
and
\begin{align*}
	\cO \left(\frac{(M + \sigma)^2 }{\varepsilon} \left(\frac{1}{r_\xi} + \frac{1}{r_\zeta}\right) \right) \text{ oracle calls},
\end{align*}
which concludes the proof.

\end{document}